\newtheorem{theorem}{Theorem}
\newtheorem{definition}[theorem]{Definition}
\newtheorem*{definition*}{Definition}
\newtheorem*{theorem*}{Theorem}
\newtheorem{lemma}[theorem]{Lemma}
\newtheorem*{lemma*}{Lemma}
\newtheorem{corollary}[theorem]{Corollary}
\newtheorem{example}[theorem]{Example}
\newtheorem{remark}[theorem]{Remark}
\newtheorem*{remark*}{Remark}
\newtheorem{proposition}[theorem]{Proposition}
\newcommand{\wsc}{\overset{*}{\rightharpoonup}}
\newcommand{\norm}[1]{\left\|{#1}\right\|}
\newcommand{\ip}[2]{\left<#1,#2\right>}
\newcommand\restrict[2]{\ensuremath{\left.#1\right|_{#2}}}
\newcommand{\dd}{\mathrm{d}}
\newcommand{\mbR}{\mathbb{R}}
\newcommand{\mbfM}{\mathbf{M}}
\newcommand{\BV}{\mathrm{BV}}
\newcommand{\Lp}{\mathrm{L}}
\newcommand{\W}{\mathrm{W}}
\newcommand{\C}{\mathrm{C}}
\newcommand{\pd}{\partial}
\newcommand{\sbullet}{\begin{picture}(1,1)(-0.5,-2)\circle*{2}\end{picture}}
\newcommand{\frarg}{\,\sbullet\,}
\def\XXint#1#2#3{{\setbox0=\hbox{$#1{#2#3}{\int}$ }
\vcenter{\hbox{$#2#3$ }}\kern-.6\wd0}}
\DeclareMathOperator{\supp}{supp}
\renewcommand{\restrict}{\begin{picture}(10,8)\put(2,0){\line(0,1){7}}\put(1.8,
0){\line(1,0){7}}\end{picture}}
\author{Filip Rindler\footnote{Mathematics Institute, University of Warwick, Coventry CV4 7AL, United Kingdom, and University of Cambridge (on leave), Gonville \& Caius College, Trinity Street, Cambridge CB2 1TA, United Kingdom; \texttt{F.Rindler@warwick.ac.uk}.} \and Giles Shaw\footnote{Cambridge Centre for Analysis, University of Cambridge, Centre for Mathematical Sciences, Wilberforce Road, Cambridge CB3 0WA, United Kingdom; \texttt{G.W.H.Shaw@maths.cam.ac.uk}.}}
\title{Strictly continuous extension of functionals with linear growth to the space $\BV$}
\begin{document}
 \maketitle
\begin{abstract}
In this paper, we prove that the integral functional $\mathcal{F}[u]\colon \BV(\Omega;\mbR^m)\to\mbR$ defined by
\[
\mathcal{F}[u]:=\int_\Omega f(x,u(x),\nabla u(x))\;\dd x+ \int_\Omega\int_0^1 f^\infty\left(x,u^\theta (x),\frac{\dd D^s u}{\dd|D^s u|}(x)\right)\;\dd\theta\;\dd|D^su|(x)
\]
is continuous over $\BV(\Omega;\mbR^m)$, with respect to the topology of area-strict convergence, a topology in which $(\W^{1,1}\cap \C^\infty)(\Omega;\mbR^m)$ is dense. This provides conclusive justification for the treatment of $\mathcal{F}$ as the natural extension of the functional
\[
 u\mapsto\int_\Omega f(x,u(x),\nabla u(x))\;\dd x,
\]
defined for $u\in \W^{1,1}(\Omega;\mbR^m)$. This result is valid for a large class of integrands satisfying $|f(x,y,A)|\leq C(1+|y|^{d/(d-1)}+|A|)$ and its proof makes use of Reshetnyak's Continuity Theorem combined with a lifting map $\mu[u]\colon \BV(\Omega;\mbR^m)\to\mbfM(\Omega\times\mbR^m;\mbR^{m\times d})$. To obtain the theorem in the case where $f$ exhibits $d/(d-1)$ growth in the $y$ variable, an embedding result from the theory of concentration-compactness is also employed.
\end{abstract}

\section{Introduction}
In the Calculus of Variations, the chief obstruction to the application of the Direct Method in studying the existence of solutions to the problem
\[
\min_{u\in \W^{1,1}(\Omega;\mbR^m)}\mathcal{F}[u]:=\min_{u\in \W^{1,1}(\Omega;\mbR^m)}\int_\Omega f\left(x,u(x),\nabla u(x)\right)\;\dd x
\]
is the fact that norm-bounded subsets of the space $\W^{1,1}(\Omega;\mbR^m)$ fail to exhibit any kind of good compactness property. This is in direct contrast to the situation where $\mathcal{F}$ is to be minimised over $\W^{1,p}(\Omega;\mbR^m)$ for $p>1$, in which case reflexivity of $\W^{1,p}(\Omega;\mbR^m)$ (or the Banach--Alaoglu Theorem when $p=\infty$) ensures that norm-bounded sets are weakly (weakly* for $p=\infty$) compact. If $f$ is assumed to be coercive, minimising sequences $(u_j)\subset \W^{1,p}(\Omega;\mbR^m)$ of $\mathcal{F}$ must be norm bounded and hence, by weak relative compactness of bounded sets in $\W^{1,p}(\Omega;\mbR^m)$, can be assumed to converge to a limit $\overline{u}\in \W^{1,p}(\Omega;\mbR^m)$, which is then a natural candidate for a global minimiser of $\mathcal{F}$. Since this argument is not applicable in $\W^{1,1}(\Omega;\mbR^m)$, the domain of $\mathcal{F}$ must be extended from $\W^{1,1}(\Omega;\mbR^m)$ to a larger function space with better compactness properties. In many cases, the right choice here is the space $\BV(\Omega;\mbR^m)$ of functions of bounded variation, a function space which admits a weak* topology under which $(\W^{1,1}\cap \C^\infty)(\Omega;\mbR^m)$ is a sequentially dense subspace of $\BV(\Omega;\mbR^m)$ and sequential weak* compactness holds.

Having obtained the existence of candidate minimisers in $\BV(\Omega;\mbR^m)$, the next step in the application of the Direct Method is to examine when $\mathcal{F}$ satisfies the lower semicontinuity property $\mathcal{F}[u]\leq\liminf_{j}\mathcal{F}[u_j]$ for every sequence $(u_j)\subset \BV(\Omega;\mbR^m)$ such that $u_j\wsc u$. In order to do this, a suitable extension of $\mathcal{F}$ to $\BV(\Omega;\mbR^m)$ must be identified so that a value can be assigned to $\mathcal{F}[u]$ for $u\in (\BV\setminus \W^{1,1})(\Omega;\mbR^m)$. There is no unique extension of $\mathcal{F}$ from $\W^{1,1}(\Omega;\mbR^m)$ to $\BV(\Omega;\mbR^m)$, and so a criterion is needed to identify the `right' extension in this context for as wide a class of integrands $f$ as possible. In general, we cannot hope to obtain $\mathcal{F}$ as the weakly* continuous extension to $\BV(\Omega;\mbR^m)$ of $u\mapsto\int_\Omega f(x,u(x),\nabla u(x))\;\dd x$: Example~\ref{areastrictbeatsstrict} below demonstrates a weakly* convergent sequence in $\W^{1,1}(\Omega;\mbR^m)$ under which the map $u\mapsto \int_\Omega\sqrt{1+|u'(x)|^2}\;\dd x$ fails to converge. A priori, it is far from clear how one might extend $\mathcal{F}$ in such a way that every $u\in \BV(\Omega;\mbR^m)$ has at least one recovery sequence $(u_j)\subset \W^{1,1}(\Omega;\mbR^m)$ (i.e. a sequence $(u_j)$ such that $u_j\wsc u$ and $\mathcal{F}[u_j]\to\mathcal{F}[u]$): the derivative $Du$ of a function $u\in \BV(\Omega;\mbR^m)$ is defined only as a (potentially Lebesgue-singular) matrix-valued measure, in which case the expression $\int_\Omega f(x,u(x),Du)\;\dd x$ is not well-defined.

The \emph{relaxation method} (essentially first proposed and implemented by Serrin, see~\cite{SERR59NDI} and~\cite{SERR61PVI}) is often used to extend $\mathcal{F}$ for a restricted class of integrands $f$: If $u$ is scalar-valued and $f(x,y,\frarg)$ is convex, or if $u$ is vector-valued and $f=f(x,A)$ and $f(x,\frarg)$ is quasiconvex it can be shown (see, for instance,~\cite{AmCiFu07RRBV,AmbDal92RBVQ,DMas79IRBV,KriRin10RSI,Rind12LSYM}) that the weak* relaxation of $\mathcal{F}$ to $\BV(\Omega;\mbR^m)$,
\begin{equation}\label{Fasrel}
\mathcal{F}_{**}[u]:=\inf\left\{\liminf_{j\to\infty}\mathcal{F}[u_j]:u_j\wsc u,\,(u_j)\subset \W^{1,1}(\Omega;\mbR^m)\right\},
\end{equation}\
 admits the integral representation
\begin{equation}\label{functionalequation}
\mathcal{F}[u]=\mathcal{F}_{**}[u]=\int_\Omega f\left(x,u(x),\nabla u(x)\right)\;\dd x +\int_\Omega\int_0^1 f^\infty\left(x,u^\theta(x),\frac{\dd D^su}{\dd|D^su|}(x)\right)\;\dd\theta\;\dd|D^su|(x)
\end{equation}
for $u\in \BV(\Omega;\mbR^m)$ (definitions for $u^\theta$ and $f^\infty$ can be found in Section~\ref{prelims}). In the general case where $u$ is vector-valued, $f=f(x,y,A)$ and $f(x,y,\frarg)$ is quasiconvex (see~\cite{FonMul93RQFB}), the representation~\eqref{functionalequation} fails and must be replaced by a more general expression where the density for the $\mathcal{H}^{d-1}$-absolutely continuous part of $\mathcal{F}$ can only be identified as the solution to a specific cell problem which does not always coincide (even when $f(x,y,\frarg)$ is convex, see~\cite{AmbPal93IRRF}) with the $\mathcal{H}^{d-1}$-density of~\eqref{functionalequation}. 

As defined in \eqref{functionalequation}, $\mathcal{F}[u]$ is equal to our original $\mathcal{F}[u]$ for $u\in \W^{1,1}(\Omega;\mbR^m)$ and is therefore an extension of the original $\mathcal{F}$ to $\BV(\Omega;\mbR^m)$. It follows from \eqref{Fasrel} that, at least in the scalar or $u$-independent case, each $u\in \BV(\Omega;\mbR^m)$ admits a recovery sequence $(u_j)\subset(\W^{1,1}\cap \C^\infty)(\Omega;\mbR^m)$, which implies that \eqref{functionalequation} meets the minimum criteria for a suitable extension of $\mathcal{F}$. In general, however, one is unable to say anything about the properties of such a recovery sequence or if a better extension of $\mathcal{F}$ exists which admits strictly more recovery sequences.

Since the restriction of $\mathcal{F}_{**}$ to $\W^{1,1}(\Omega;\mbR^m)$ is lower semicontinuous with respect to weak convergence in $\W^{1,1}(\Omega;\mbR^m)$, it can only be used to extend $\mathcal{F}$ in situations where $\mathcal{F}$ is also lower semicontinuous over $\W^{1,1}(\Omega;\mbR^m)$ (i.e., when $f(x,y,\frarg)$ is convex/quasiconvex) and so the relaxation method cannot be used to extend $\mathcal{F}$ for general integrands. As defined in \eqref{functionalequation}, however, the restriction of $\mathcal{F}[u]$ to $\W^{1,1}(\Omega;\mbR^m)$ is always equal to $\int_\Omega f(x,u(x),\nabla u(x))\;\dd x$, regardless of the convexity properties of $f$. This suggests that, if the extension given by \eqref{functionalequation} still admits $\W^{1,1}(\Omega;\mbR^m)$-recovery sequences, it can be taken as a candidate functional for the extension of $\mathcal{F}$ to $\BV(\Omega;\mbR^m)$ even when $f$ is not convex. In order to justify this position, we must find a way of showing that the extension given by \eqref{functionalequation} always admits $\W^{1,1}(\Omega;\mbR^m)$-recovery sequences and argue that no better extension is to be found.

This paper is primarily devoted to proving the following theorem, which establishes that \eqref{functionalequation} defines an extension of $\mathcal{F}$ valid for \emph{general} integrands $f$ in a way that satisfies all of the requirements above:
\begin{theorem}\label{unbddcontinuitythm}
 Let $\Omega\subset\mbR^d$ be a bounded domain with compact Lipschitz boundary, define $1^*:=d/(d-1)$ ($1^*=\infty$ if $d=1$) and let $p\in[1,1^*]$ if $d\geq 2$ and $p\in[1,1^*)$ if $d=1$. Let $f\in \C(\Omega\times\mbR^m\times\mbR^{m\times d})$ satisfy the requirements
\begin{equation}\label{assumption1}
|f(x,y,A)|\leq C(1+|y|^p+|A|) \quad\text{ for all } (x,y,A)\in\Omega\times\mbR^m\times\mbR^{m\times d}
\end{equation}
and
\begin{equation}\label{assumption2}
 f^\infty\text{ exists}.
\end{equation}
Then, the functional
\begin{equation*}
 \mathcal{F}[u]:=\int_\Omega f\left(x,u(x),\nabla u(x)\right)\;\dd x +\int_\Omega\int_0^1 f^\infty\left(x,u^\theta(x),\frac{\dd D^su}{\dd|D^su|}(x)\right)\;\dd\theta\;\dd|D^su|(x)
\end{equation*}
is area-strictly continuous on $\BV\left(\Omega;\mbR^m\right)$.
\end{theorem}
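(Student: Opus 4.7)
My plan is to reduce the result to Reshetnyak's Continuity Theorem by lifting $u$ and $Du$ into a single vector-valued measure $\tilde\mu[u]$ on the product space $\Omega\times\mbR^m$, in such a way that $\mathcal{F}[u]$ becomes the integral of a positively $1$-homogeneous integrand against $\tilde\mu[u]$ and area-strict convergence in $\BV$ is reflected in strict convergence of the lifted measures.

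Concretely, I would first define the $\mbR^{m\times d}$-valued measure $\mu[u]$ on $\Omega\times\mbR^m$ by
\[
\ddprb{\mu[u],\varphi} := \int_\Omega\varphi(x,u(x)):\nabla u(x)\,\dd x + \int_\Omega\int_0^1\varphi(x,u^\theta(x))\,\dd\theta:\frac{\dd D^s u}{\dd|D^s u|}(x)\,\dd|D^s u|(x)
\]
acting on $\varphi\in\C_0(\Omega\times\mbR^m;\mbR^{m\times d})$, together with the positive graph measure $\lambda[u]:=(\mathrm{id},u)_\#\mathcal{L}^d$, and combine them into $\tilde\mu[u]:=(\mu[u],\lambda[u])\in\mbfM(\Omega\times\mbR^m;\mbR^{m\times d}\oplus\mbR)$. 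A short polar-decomposition computation yields $|\tilde\mu[u]|(\Omega\times\mbR^m)=\int_\Omega\sqrt{1+|\nabla u|^2}\,\dd x+|D^s u|(\Omega)$, which is precisely the area-strict mass of $u$. Introducing the perspective integrand $\tilde f(x,y,A,t):=tf(x,y,A/t)$ for $t>0$, extended by $\tilde f(x,y,A,0):=f^\infty(x,y,A)$, assumption~\eqref{assumption2} ensures that $\tilde f$ is continuous and positively $1$-homogeneous in $(A,t)$, and a direct calculation splitting into absolutely continuous and singular parts verifies
\[
\mathcal{F}[u]=\int_{\Omega\times\mbR^m}\tilde f\!\left(x,y,\frac{\dd\tilde\mu[u]}{\dd|\tilde\mu[u]|}\right)\dd|\tilde\mu[u]|.
\]

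The crucial analytic step is then to show that the lifting $u\mapsto\tilde\mu[u]$ is continuous from $\BV(\Omega;\mbR^m)$ with area-strict convergence into $\mbfM(\Omega\times\mbR^m;\mbR^{m\times d}\oplus\mbR)$ with strict convergence. Convergence of total masses $|\tilde\mu[u_j]|(\Omega\times\mbR^m)\to|\tilde\mu[u]|(\Omega\times\mbR^m)$ is, by construction, equivalent to area-strict convergence of $u_j\to u$. For weak-$*$ convergence one must pass to the limit in pairings of the form $\int_\Omega\varphi(x,u_j(x))\,\dd Du_j$, which I would handle by splitting $Du_j$ into its absolutely continuous and singular pieces and combining strong $\Lp^1$ convergence $u_j\to u$ (a consequence of area-strict convergence) with the weak-$*$ convergence $Du_j\wsc Du$; the singular part requires additional care because of the jump averages $u_j^\theta$, which is resolved by a truncation and approximation argument that exploits the fact that~\eqref{assumption1} only requires control on test functions of growth $1+|y|^p$. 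Once this lifting continuity is in hand, Reshetnyak's Continuity Theorem applied to $\tilde f$ delivers the area-strict continuity of $\mathcal{F}$.

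The principal obstacle is the critical case $p=1^*=d/(d-1)$: the embedding $\BV\hookrightarrow\Lp^{1^*}$ is merely continuous (not compact), so mass of $\tilde\mu[u_j]$ could in principle escape to infinity in the $y$-direction, obstructing both weak-$*$ convergence on test functions of growth $1+|y|^{1^*}$ and the application of Reshetnyak's theorem, which is classically formulated on a compact base. To deal with this I would invoke a concentration-compactness argument adapted to the critical Sobolev embedding, aiming to show that area-strict convergence $u_j\to u$ rules out the concentration and vanishing alternatives and hence forces the family $(|u_j|^{1^*})$ to be equi-integrable; this translates into uniform vanishing of the $|y|^p$-tails of $|\tilde\mu[u_j]|$, which is exactly the tightness needed to test the lifted measures against the enlarged class of integrands dictated by~\eqref{assumption1} and thereby closes the argument. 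The subcritical range $p<1^*$ (in particular the full case $d=1$) is strictly easier since it follows from the compact subcritical Sobolev embedding and standard uniform integrability.
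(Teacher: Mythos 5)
Your overall architecture coincides with the paper's: a lifting of $u$ to a measure on $\Omega\times\mbR^m$, the perspective integrand $\tilde f$ to restore positive $1$-homogeneity, Reshetnyak's Continuity Theorem, and concentration-compactness for the critical exponent. However, two steps that you assert are precisely where the real work lies, and the methods you sketch for them would not go through. First, the strict continuity of the lifting map $u\mapsto\mu[u]$ cannot be obtained by ``splitting $Du_j$ into its absolutely continuous and singular pieces'' and pairing with $u_j\to u$ in $\Lp^1$: under (area-)strict convergence the absolutely continuous and singular parts of $Du_j$ do \emph{not} converge separately (e.g.\ smooth $u_j$ have no singular part while the limit may), and the weak* limit of $\varphi(x,u_j(x))\,Du_j$ is in general \emph{not} $\varphi(x,\tilde u(x))\,Du$ but involves the jump average $\int_0^1\varphi(x,u^\theta)\,\dd\theta$ on $\mathcal{J}_u$. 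Identifying the limit correctly is the content of the paper's Lemma on liftings, which requires the null-Lagrangian identity $Q_\varphi(u_j,\mu[u_j])=0$ for $\varphi\in\C^1_0$, the matching of projections $\pi_\sharp|\eta|=|Du|$ via lower semicontinuity plus strict convergence of masses, disintegration of the limit measure, and two further lemmas pinning down the fibre measures on the segments $[u^-(x),u^+(x)]$. A ``truncation and approximation argument'' does not substitute for this.

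Second, your claim that equi-integrability of $(|u_j|^{1^*})$ (from concentration-compactness) ``translates into uniform vanishing of the $|y|^p$-tails of $|\tilde\mu[u_j]|$'' conflates two different quantities. The tail of the lifted measure over $\{|y|\geq k\}$ is essentially $\int_{\{|u_j|\geq k\}}\sqrt{1+|\nabla u_j|^2}\,\dd x$ plus singular contributions; equi-integrability of $|u_j|^{1^*}$ controls $\int_{\{|u_j|\geq k\}}|u_j|^p\,\dd x$ but says nothing about $\int_{\{|u_j|\geq k\}}|\nabla u_j|\,\dd x$. The paper controls this gradient tail by a separate argument: applying Reshetnyak's \emph{Lower Semicontinuity} Theorem to the integrand $(y,A)\mapsto(1-\mathbbm{1}_{\mbR^m\setminus B_k(0)}(y))|A|$ together with the already-established strict convergence $|\mu[u_j]|(\Omega\times\mbR^m)\to|\mu[u]|(\Omega\times\mbR^m)$ to convert the $\liminf$ into a $\limsup$ bound, and then letting $k\to\infty$ by dominated convergence. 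Without this step (or an equivalent tightness estimate for the gradient part of the lifting), your truncation in $y$ leaves an uncontrolled remainder and the proof does not close.
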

Here, area-strict convergence (defined in Section~\ref{prelims}) is a notion of convergence with respect to which $(\W^{1,1}\cap \C^\infty)(\Omega;\mbR^m)$ is dense in $\BV(\Omega;\mbR^m)$ and which implies weak* convergence. Every area-strictly convergent sequence is thus a recovery sequence and, by area-strict density of $(\W^{1,1}\cap \C^\infty)(\Omega;\mbR^m)$ in $\BV(\Omega;\mbR^m)$, \eqref{functionalequation} is the \emph{unique} extension of $\mathcal{F}$ to $\BV(\Omega;\mbR^m)$ for which this holds. Related results can be found in~\cite{Dell91LSCF} and Theorem~3 in~\cite{KriRin10RSI}.

A surprising implication of Theorem~\ref{unbddcontinuitythm} and the failure of the representation~\eqref{functionalequation} for the case $f=f(x,y,A)$, $m>1$, is that, in contrast to the situation where $f=f(x,A)$ or $m=1$, the relaxation $\mathcal{F}_{**}$, cannot be area-strictly continuous in general, not even when $f(x,y,\frarg)$ is convex. Conversely, it must also be the case that, even when $f(x,y,\frarg)$ is convex or quasiconvex, $\mathcal{F}$ is not always weakly* lower semicontinuous over $\BV(\Omega;\mbR^m)$, despite being area-strictly continuous over $\BV(\Omega;\mbR^m)$ and weakly* lower semicontinuous over $\W^{1,1}(\Omega;\mbR^m)$.

The Rellich-Kondrachov Theorem for $\BV(\Omega;\mbR^m)$ states that $\BV(\Omega;\mbR^m)\hookrightarrow \Lp^p(\Omega;\mbR^m)$ for $p\in[1,1^*]$ and it is known that this embedding result is sharp, so that $\BV(\Omega;\mbR^m)$ cannot be embedded into any higher $\Lp^p$ space. Hence, the growth hypothesis $|f(x,y,A)|\leq C(1+|y|^{1^*}+|A|)$ in Theorem~\ref{unbddcontinuitythm} is optimal, in that it represents the weakest natural condition necessary to ensure that $\mathcal{F}$ is finite on $\BV(\Omega;\mbR^m)$. It might seem natural that the result holds for $f$ satisfying $|f(x,y,A)|\leq C(1+|y|^p+|A|)$ when $p< 1^*$, as a consequence of the fact that in this case the embedding $\BV(\Omega;\mbR^m)\hookrightarrow \Lp^p(\Omega;\mbR^m)$ is compact. That (for $d>1$) this result is true even when $p=1^*$ is surprising and the proof in this case makes use of Lions' concentration compactness principle. For the case $d=1$ where $1^*=\infty$, Example \ref{excriticalcasefails} demonstrates that the theorem does not hold. We will also show that this result holds true for Carath\'eodory $f$, so long as the recession function $f^\infty$ remains continuous on $(\Omega\setminus N)\times\mbR^m\times\mbR^{m\times d}$, where $N$ is $\mathcal{H}^{d-1}$ negligible, see Theorem~\ref{notcts}.

The organisation of this paper is as follows: First, necessary preliminaries are introduced in Section~\ref{prelims}, including definitions for the recession function $f^\infty$, the jump averaging function $u^\theta$ and the metrics of strict and area-strict convergence on $\BV(\Omega;\mbR^m)$. Then, the main tools for the proof of Theorem~\ref{unbddcontinuitythm} are introduced in Section~\ref{secliftings}: the lifting $\mu[u]\in \mbfM(\Omega\times\mbR^m;\mbR^{m\times d})$ of a function $u\in \BV(\Omega;\mbR^m)$ is defined, the strict continuity of the map $u\mapsto\mu[u]$ is proved and a proposition is established stating that the embedding
\[
 \left(\BV(\Omega;\mbR^m),\;\textbf{strict}\right)\hookrightarrow \Lp^{d/(d-1)}(\Omega;\mbR^m)
\]
is continuous (here, \textbf{strict} denotes the topology induced by strict convergence in $\BV(\Omega;\mbR^m)$, see Section~\ref{prelims}). The perspective integrand $\tilde{f}$ of $f$ is then introduced in Section~\ref{secperints} which, together with the properties of liftings obtained in Section~\ref{secliftings} and Reshetnyak's Continuity Theorem, is used to prove Theorem~\ref{unbddcontinuitythm} in the case where $f$ is bounded in the middle variable. The full version of Theorem~\ref{unbddcontinuitythm} is then established in Section~\ref{bigproof} via an approximation argument combined with the strict continuity of the embedding $\BV(\Omega;\mbR^m)\hookrightarrow \Lp^{d/(d-1)}(\Omega;\mbR^m)$. Finally, the continuity assumptions on $f$ and $f^\infty$ are weakened and an example is provided to show that this refinement of the theorem is optimal.

\subsection{Funding}
This work was supported by the UK Engineering and Physical Sciences Research Council (EPSRC) [EP/H023348/1 for the University of Cambridge Centre for Doctoral Training, the Cambridge Centre for Analysis, to G.S., EP/L018934/1 to F.R.]; and the University of Warwick.

\subsection{Acknowledgements}
The authors would like to thank Jan Kristensen for many insightful conversations related to this paper as well as Helge Dietert, Tom Holding and Marcus Webb for comments and remarks.

\section{Preliminaries}\label{prelims}
\subsection{Facts about $\Lp^p$ and the recession function}
Throughout this paper, $\Omega\subset\mbR^d$ will be assumed to be a connected open set with compact Lipschitz boundary (in fact, $\Omega$ need only be a connected bounded extension domain, but we omit this throughout the paper for ease of reading) and $\mathbb{B}^{m\times d}$ will denote the open unit ball in $\mbR^{m\times d}$	. The constant $1^*$ is defined to  be the \textbf{critical Sobolev embedding exponent}
\[
 1^*:=\begin{cases}
      \frac{d}{d-1}&\text{ if }d>1,\\
      \infty &\text{ if }d=1.
      \end{cases}
\]

We recall here some technical facts about weak and norm convergence in $\Lp^p$ spaces which will be used in the sequel:

\begin{lemma}[Brezis--Lieb Lemma]
 Let $1\leq p<\infty$ and let $(u_j),u$ be functions in $\Lp^p(\Omega;\mbR^m)$ such that
\[
u_j\rightharpoonup u\qquad\text{ and }\qquad u_j\to u \text{ pointwise a.e.}
\]
Then we have that
\[
 \lim_{j\to\infty}\norm{u_j}_p^p=\norm{u}_p^p+\lim_{j\to\infty}\norm{u_j-u}_p^p.
\]
\end{lemma}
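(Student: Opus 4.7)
The plan is to reduce the statement to the pointwise elementary inequality: for every $\varepsilon > 0$ there exists a constant $C_\varepsilon > 0$ such that
\[
\bigl||a+b|^p - |a|^p - |b|^p\bigr| \leq \varepsilon |a|^p + C_\varepsilon |b|^p \qquad \text{for all } a,b \in \mbR^m.
\]
This is proved by homogeneity and continuity (on $\{|b| \leq \delta |a|\}$ use a Taylor/mean-value estimate, on $\{|b| \geq \delta |a|\}$ absorb into the $|b|^p$ term). The first paragraph of the proof would establish or quote this inequality.

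Next, I would apply the inequality with the substitution $a = u_j(x) - u(x)$ and $b = u(x)$, so that $a+b = u_j(x)$, to obtain
\[
\bigl||u_j(x)|^p - |u_j(x)-u(x)|^p - |u(x)|^p\bigr| \leq \varepsilon |u_j(x)-u(x)|^p + C_\varepsilon |u(x)|^p.
\]
Setting
\[
h_j^\varepsilon(x) := \max\Bigl\{\,\bigl||u_j(x)|^p - |u_j(x)-u(x)|^p - |u(x)|^p\bigr| - \varepsilon |u_j(x)-u(x)|^p,\; 0\,\Bigr\},
\]
one sees that $0 \leq h_j^\varepsilon \leq C_\varepsilon |u|^p \in \Lp^1(\Omega)$ and, since $u_j \to u$ pointwise a.e., $h_j^\varepsilon \to 0$ pointwise a.e. The Dominated Convergence Theorem then yields $\int_\Omega h_j^\varepsilon\,\dd x \to 0$ as $j \to \infty$.

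In the final step I would use the weak convergence $u_j \rightharpoonup u$: by the Uniform Boundedness Principle the sequence $(u_j)$ is bounded in $\Lp^p(\Omega;\mbR^m)$, hence $M := \sup_j \norm{u_j - u}_p^p < \infty$. Integrating the pointwise estimate above gives
\[
\left|\,\norm{u_j}_p^p - \norm{u_j - u}_p^p - \norm{u}_p^p\,\right| \leq \varepsilon M + \int_\Omega h_j^\varepsilon\,\dd x,
\]
so that $\limsup_{j\to\infty} |\norm{u_j}_p^p - \norm{u_j - u}_p^p - \norm{u}_p^p| \leq \varepsilon M$. Letting $\varepsilon \downarrow 0$ yields the claimed identity (interpreted in the sense that $\norm{u_j}_p^p - \norm{u_j - u}_p^p \to \norm{u}_p^p$, so that existence of either of the two limits in the statement forces existence of the other).

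The only substantive obstacle is the elementary pointwise inequality; once that is in hand the proof is a clean application of Dominated Convergence. I expect the weak convergence hypothesis is included solely to guarantee $\Lp^p$ uniform boundedness, which is what the argument actually uses.
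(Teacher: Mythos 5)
Your proof is correct. The paper does not actually prove this lemma --- it only cites the original Brezis--Lieb paper and Evans's CBMS notes --- and your argument is precisely the classical one from those references: the elementary inequality $\bigl||a+b|^p-|a|^p-|b|^p\bigr|\leq\varepsilon|a|^p+C_\varepsilon|b|^p$, the truncation $h_j^\varepsilon$ dominated by $C_\varepsilon|u|^p$, Dominated Convergence, and the $\varepsilon M$ error absorbed at the end. You are also right that the weak convergence hypothesis enters only through the uniform bound $\sup_j\norm{u_j-u}_p<\infty$, and your reading of the conclusion as $\norm{u_j}_p^p-\norm{u_j-u}_p^p\to\norm{u}_p^p$ is the correct interpretation of the displayed identity.
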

A proof of this result can be found in~\cite{BreLie83RBC} and also in~\cite{Evan90WCMN}.

\begin{theorem}[Scorza Dragoni]
 Let $f\colon\Omega\times\mbR^m\times\mbR^{m\times d}\to\mbR$ be Carath\'eodory. Then for every $\varepsilon>0$, there exists a compact set $K_\varepsilon\subset\Omega$ such that $\mathcal{L}^d(\Omega\setminus K_\varepsilon)<\varepsilon$ and $f\restrict(K_\varepsilon\times\mbR^m\times\mbR^{m\times d})$ is continuous.
\end{theorem}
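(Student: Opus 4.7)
The plan is to combine two classical approximation theorems: Lusin's theorem will handle the $x$-variable at each of a countable dense collection of $(y,A)$'s, and Egorov's theorem will upgrade the pointwise-in-$(y,A)$ continuity to continuity that is uniform in $x$ on a large set. The Carathéodory hypothesis supplies exactly these two ingredients, and they will be glued together via a standard three-$\varepsilon$ argument at the end. I expect the main subtleties to be measurability of the modulus of continuity and care in combining the estimates.

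First, I would fix a countable dense sequence $\{(y_n,A_n)\}_{n\in\mathbb{N}}$ in $\mbR^m\times\mbR^{m\times d}$. For each $n$, the map $x\mapsto f(x,y_n,A_n)$ is $\mathcal{L}^d$-measurable by the Carathéodory assumption, so since $\Omega$ is bounded with finite Lebesgue measure, Lusin's theorem supplies a compact set $K_n^{(1)}\subset\Omega$ with $\mathcal{L}^d(\Omega\setminus K_n^{(1)})<\varepsilon\,2^{-n-1}$ on which $f(\frarg,y_n,A_n)$ is continuous.

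Next, I would control the modulus of continuity in $(y,A)$ uniformly in $x$. For each integer $k\geq 1$, let $\overline{B}_k\subset\mbR^m\times\mbR^{m\times d}$ be the closed ball of radius $k$, and define
\[
\omega_k(x,\delta):=\sup\bigl\{|f(x,y,A)-f(x,y',A')|:(y,A),(y',A')\in\overline{B}_k,\ |(y,A)-(y',A')|\leq\delta\bigr\}.
\]
By continuity of $f(x,\frarg,\frarg)$ for a.e.\ $x$, the supremum may be taken over a countable dense subset of $\overline{B}_k\times\overline{B}_k$, so $x\mapsto\omega_k(x,\delta)$ is measurable. For a.e.\ $x\in\Omega$, $f(x,\frarg,\frarg)$ is uniformly continuous on the compact ball $\overline{B}_k$, giving $\omega_k(x,1/j)\to 0$ as $j\to\infty$. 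Egorov's theorem on $\Omega$ (which has finite measure) then produces a compact set $K_k^{(2)}\subset\Omega$ with $\mathcal{L}^d(\Omega\setminus K_k^{(2)})<\varepsilon\,2^{-k-1}$ on which this convergence is uniform.

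Finally, I would set $K_\varepsilon:=\bigcap_n K_n^{(1)}\cap\bigcap_k K_k^{(2)}$, which is a closed subset of the bounded set $\overline{\Omega}$, hence compact, and satisfies $\mathcal{L}^d(\Omega\setminus K_\varepsilon)<\varepsilon$ by subadditivity. To verify joint continuity of $f$ on $K_\varepsilon\times\mbR^m\times\mbR^{m\times d}$, take $(x_0,y_0,A_0)$ in this product and a convergent sequence $(x_j,y_j,A_j)\to(x_0,y_0,A_0)$, choose $k$ so large that all terms lie in $\overline{B}_k$, pick $(y_n,A_n)$ close to $(y_0,A_0)$, and split
\[
|f(x_j,y_j,A_j)-f(x_0,y_0,A_0)|\leq\omega_k(x_j,\delta_n)+|f(x_j,y_n,A_n)-f(x_0,y_n,A_n)|+\omega_k(x_0,\delta_n),
\]
where $\delta_n$ bounds the distances between $(y_j,A_j)$ or $(y_0,A_0)$ and $(y_n,A_n)$. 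The two $\omega_k$-terms are small by the uniform Egorov estimate on $K_\varepsilon$, and the middle term vanishes as $j\to\infty$ by continuity from the Lusin step. The delicate point is really packaging the interchange of the three $\varepsilon$'s so that $n$ is chosen first (depending on $k$ and the target accuracy) and then $j$ is taken large, but this is routine once the construction above is in place.
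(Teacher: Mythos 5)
The paper does not prove this statement; it is quoted as a classical result with a citation to Dacorogna's book, so there is no internal proof to compare against. Your Lusin-plus-Egorov argument is the standard textbook proof of the Scorza--Dragoni theorem and is correct: the countable dense family $\{(y_n,A_n)\}$ handled by Lusin, the truncated moduli $\omega_k$ handled by Egorov, and the three-term splitting at the end are exactly the classical scheme. Two small refinements you should make explicit: Egorov's theorem by itself only yields a measurable set of almost full measure on which the convergence $\omega_k(\frarg,1/j)\to 0$ is uniform, so you need one further application of inner regularity of Lebesgue measure to replace it by a compact subset; and summing the countably many strict bounds $\varepsilon\,2^{-n-1}$ only gives $\mathcal{L}^d(\Omega\setminus K_\varepsilon)\leq\varepsilon$, so start from $\varepsilon/2$ (or similar) to recover the strict inequality in the statement. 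Neither point affects the substance of the argument.
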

For a proof, see~\cite[p.~74]{Daco08DMCV}.

We now define the recession function $f^\infty$ of an integrand $f$, whose purpose is to capture information about the behaviour of $f(x,y,A)$ as $|A|\to\infty$. Note that we require our definition of the recession function to be more restrictive than what is usually found in the literature (where only the existence of $\lim_{t\to\infty}f(x,y,tA)/t$ or $\limsup_{t\to\infty}f(x,y,tA)/t$ is assumed).
\begin{definition}[The recession function]
 For $f\in \C\left(\Omega\times\mbR^m\times\mbR^{m\times d}\right)$, define the recession function $f^\infty\in \C(\Omega\times\mbR^m\times\mbR^{m\times d})$ of $f$ by
\begin{equation*}
 f^\infty\left(x,y,A\right)=\lim_{\substack{x'\to x\\y'\to y\\ A'\to A\\t\to\infty}}\frac{f\left(x',y',tA'\right)}{t},
\end{equation*}
whenever the right hand side exists independently of the order in which the limits of the individual sequences are taken and of which sequences are used.
\end{definition}
The definition of the recession function implies that, whenever it exists, it must be continuous. This property is necessary in order for the function $\tilde{f}$ defined in the proof of Lemma~\ref{continuitythm} to be continuous which, in turn is necessary for Reshetnyak's Continuity Theorem to hold. For further intricacies related to the definition of the recession function, we refer to~\cite{Rind12LSYM}. Note that the recession function is \textbf{positively} $1$-\textbf{homogeneous} in the final variable, that is $f^\infty(x,y,\lambda A)=\lambda f^\infty(x,y,A)$ for each $\lambda\geq 0$.

\subsection{Facts about measures}
We will denote by $\mathcal{B}(X)$ the Borel $\sigma$-algebra on a normed space $X$ and the space of $\mbR^{m\times d}$-valued Radon measures acting on $X$ (we will always take $X=\Omega$, $X=\mbR^m$ or $X=\Omega\times\mbR^m$) by $\mbfM(X;\mbR^{m\times d})$. The spaces of scalar-valued and positive measures on $X$ will be denoted by $\mbfM(X)$ and $\mbfM^+(X)$ respectively. A sequence of measures  $(\mu_j)$ is said to converge \textbf{strictly} to $\mu$ if $\mu_j\wsc\mu$ and $|\mu_j|(X)\to|\mu|(X)$, where $|\mu|$ is the total variation measure of $\mu$. We will denote the Radon-Nikodym derivative (or polar function) of a measure $\mu$ with respect to its total variation by $\frac{\dd\mu}{\dd|\mu|}$.

The following theorems concerning the convergence of nonlinear functionals of measures will be of great importance:

\begin{theorem}[Reshetnyak's Lower Semicontinuity Theorem]\label{reshetnyaklowersemicontinuity}
Let $\mu,(\mu_j)_{j\in\mathbb{N}}$ be $\mbR^{m\times d}$-valued finite Radon measures on  $\Omega\times\mbR^m$. If $\mu_j\wsc\mu$ then
\[
 \int_{\Omega\times\mbR^m} f\left(x,y,\frac{\dd\mu}{\dd|\mu|}(x,y)\right)\;\dd|\mu|(x,y)\leq\liminf_{j\to\infty}\int_{\Omega\times\mbR^m} f\left(x,y,\frac{\dd\mu_j}{\dd|\mu_j|}(x,y)\right)\;\dd|\mu_j|(x,y)
\]
for every lower semicontinuous function $f\colon\Omega\times\mbR^m\times\mbR^{m\times d}\to[0,\infty]$ which is positively 1-homogenous and convex in the last variable.
\end{theorem}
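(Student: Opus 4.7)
The plan is to realise the functional
\[
F[\mu] := \int_{\Omega\times\mbR^m} f\Bigl(x,y,\tfrac{\dd\mu}{\dd|\mu|}(x,y)\Bigr)\,\dd|\mu|(x,y)
\]
as a pointwise supremum of weak*-continuous affine functionals on $\mbfM(\Omega\times\mbR^m;\mbR^{m\times d})$. Once this is achieved, lower semicontinuity is automatic: for any admissible test functional $G$ one has $G[\mu]=\lim_j G[\mu_j]\leq\liminf_j F[\mu_j]$, and taking the supremum over $G$ gives $F[\mu]\leq\liminf_j F[\mu_j]$.

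The first step is a duality representation for $f$ itself. For each fixed $(x,y)$, the slice $f(x,y,\cdot)$ is convex, positively $1$-homogeneous and lower semicontinuous on $\mbR^{m\times d}$, hence coincides with the support function of the closed convex set
\[
K(x,y):=\bigl\{V\in\mbR^{m\times d}:\langle V,A\rangle\leq f(x,y,A)\ \text{for every }A\bigr\}.
\]
After reducing to continuous, finite-valued $f$ by approximating from below via an increasing sequence of continuous, convex, $1$-homogeneous integrands (for instance by inf-convolving in the last variable and truncating $K$ by a growing sequence of compact sets) and invoking monotone convergence, I would exploit separability of $\C_0(\Omega\times\mbR^m;\mbR^{m\times d})$ and the upper hemicontinuous dependence of $(x,y)\mapsto K(x,y)$ to construct a countable family $\{\phi_k\}\subset\C_0(\Omega\times\mbR^m;\mbR^{m\times d})$ of continuous selections $\phi_k(x,y)\in K(x,y)$ such that
\[
f(x,y,A)=\sup_{k\in\mathbb{N}}\langle\phi_k(x,y),A\rangle\qquad\text{for every }(x,y,A).
\]

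The second, principal, step is the duality formula
\[
F[\mu]=\sup\left\{\int\phi\cdot\dd\mu:\phi\in\C_0(\Omega\times\mbR^m;\mbR^{m\times d}),\ \langle\phi(x,y),A\rangle\leq f(x,y,A)\ \forall A\right\},
\]
which, combined with the weak*-continuity of $\mu\mapsto\int\phi\cdot\dd\mu$ for each admissible $\phi\in\C_0$, finishes the argument. The inequality $\geq$ is immediate from $\phi(x,y)\in K(x,y)$. The reverse inequality is the main obstacle and will require a gluing argument: given $\varepsilon>0$, I would apply Lusin's theorem to the polar $\tfrac{\dd\mu}{\dd|\mu|}$ and use outer regularity of $|\mu|$ together with countability of $\{\phi_k\}$ to cover $\Omega\times\mbR^m$, up to a set of arbitrarily small $|\mu|$-mass, by finitely many open sets $U_i$ on which a single $\phi_{k_i}$ is $\varepsilon$-close to realising $f\bigl(x,y,\tfrac{\dd\mu}{\dd|\mu|}(x,y)\bigr)$. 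A continuous partition of unity $\{\eta_i\}$ subordinate to $\{U_i\}$ then glues these selections into $\tilde\phi:=\sum_i\eta_i\phi_{k_i}\in\C_0$, and convexity of $f$ in the last variable is the critical ingredient ensuring
\[
\langle\tilde\phi(x,y),A\rangle\leq\sum_i\eta_i(x,y)f(x,y,A)=f(x,y,A),
\]
so $\tilde\phi$ is admissible and $\int\tilde\phi\cdot\dd\mu$ exhausts $F[\mu]$ up to $O(\varepsilon(1+|\mu|(\Omega\times\mbR^m)))$. This establishes the representation and hence the weak*-lower semicontinuity of $F$.
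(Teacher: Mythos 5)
The paper itself offers no proof of this theorem: it is quoted as a classical result with references to~\cite{AmFuPa00FBVF} and~\cite{Spec11SPRR}, so there is nothing in the text to compare your argument against line by line. Your strategy --- writing the functional as a supremum of weak*-continuous linear functionals $\mu\mapsto\int\phi\cdot\dd\mu$ over continuous selections $\phi(x,y)\in K(x,y)$ --- is the classical Goffman--Serrin/duality route and is sound in outline. It is genuinely different from the proof in~\cite{AmFuPa00FBVF} (Theorem~2.38 there), which pushes $|\mu_j|$ forward under $(x,y)\mapsto\bigl(x,y,\tfrac{\dd\mu_j}{\dd|\mu_j|}(x,y)\bigr)$, extracts a weak* limit on $\Omega\times\mbR^m\times\pd\mathbb{B}^{m\times d}$, disintegrates it, and concludes via Jensen's inequality and $1$-homogeneity. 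The duality route buys you an explicit representation of the functional as a supremum of affine weak*-continuous maps (useful in its own right), at the price of the selection lemma you need in your first step.

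That selection lemma is where your write-up has real soft spots. The existence of countably many $\phi_k\in\C_0(\Omega\times\mbR^m;\mbR^{m\times d})$ with $\phi_k(x,y)\in K(x,y)$ and $f(x,y,A)=\sup_k\langle\phi_k(x,y),A\rangle$ everywhere is precisely Lemma~2.35 of~\cite{AmFuPa00FBVF} and is the technical heart of this proof; it cannot be dispatched by invoking ``upper hemicontinuous dependence of $(x,y)\mapsto K(x,y)$''. Upper hemicontinuity of $K$ would follow from upper semicontinuity of $f$ and generally \emph{fails} here; what a selection argument needs is lower hemicontinuity, which does follow from joint lower semicontinuity of $f$, but only in the delicate ``strict minorant'' form, and $K(x,y)$ may have empty interior (degenerate $f$), so the construction takes genuine work. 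Likewise, the preliminary reduction to continuous, finite-valued $f$ does not work as described: truncating $K(x,y)$ by balls preserves lower semicontinuity at best (consider $f(x,A)=\mathbbm{1}_C(x)|A|$ with $C$ open, which is unchanged by truncation and is not continuous), and inf-convolution in $A$ alone gives no continuity in $(x,y)$. Fortunately that reduction is unnecessary: once the countable-sup lemma is in hand for lower semicontinuous $f$, monotone convergence applied to $\max_{k\leq K}\langle\phi_k(x,y),\tfrac{\dd\mu}{\dd|\mu|}(x,y)\rangle$ handles $[0,\infty]$-valued $f$ directly. Your gluing step is essentially correct --- note only that the error on the exceptional set must be controlled by $\max_{k\leq K}\norm{\phi_k}_\infty$ (you use finitely many $\phi_k$ at a time) times its $|\mu|$-mass, and that convexity actually enters at the Fenchel--Moreau step identifying $f(x,y,\cdot)$ with the support function of $K(x,y)$, rather than in the displayed partition-of-unity inequality, which only uses linearity of the pairing, $\sum_i\eta_i\leq 1$ and $f\geq 0$.
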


\begin{theorem}[Reshetnyak's Continuity Theorem]\label{Reshetnyak}
Let $f\in
\C\left(\Omega\times\mbR^m\times\pd\mathbb{B}^{m\times d}\right)$ be bounded. Then, the map
\begin{equation*}
 \mu \mapsto \int_{\Omega\times\mbR^m} f\left(x,y,\frac{\dd\mu}{\dd|\mu|}(x,y)\right)\dd|\mu|(x,y)
\end{equation*}
is continuous on $\mbfM(\Omega\times\mbR^m;\mbR^{m\times d})$ with respect to strict
convergence.
\end{theorem}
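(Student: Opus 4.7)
The plan is to lift each $\mu \in \mbfM(\Omega\times\mbR^m;\mbR^{m\times d})$ to its associated \emph{polar measure} $\pi_\mu := |\mu|\otimes\delta_{\frac{\dd\mu}{\dd|\mu|}(x,y)}$ on the product space $Y := \Omega\times\mbR^m\times\pd\mathbb{B}^{m\times d}$, and to show that $\mu\mapsto\pi_\mu$ is continuous from the strict topology on $\mbfM(\Omega\times\mbR^m;\mbR^{m\times d})$ into the narrow topology on $\mbfM^+(Y)$. Once this is established the theorem follows immediately, as the functional in the statement equals $\int_Y f\,\dd\pi_\mu$ and narrow convergence on $Y$ against the bounded continuous $f$ delivers the required continuity of this integral.

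First I would upgrade strict convergence $\mu_j\to\mu$ to weak* convergence $|\mu_j|\wsc|\mu|$ of total variations. Applying Theorem~\ref{reshetnyaklowersemicontinuity} to the nonnegative, convex, positively $1$-homogeneous integrand $(x,y,A)\mapsto \phi(x,y)|A|$ for an arbitrary compactly supported $\phi\in\C(\Omega\times\mbR^m)$ with $\phi\geq 0$ yields $\int\phi\,\dd|\mu|\leq\liminf_j\int\phi\,\dd|\mu_j|$. Applying the same bound to $M-\phi$ for $M\geq\sup\phi$ and then exploiting the convergence of total masses built into strict convergence produces the matching upper bound $\limsup_j\int\phi\,\dd|\mu_j|\leq\int\phi\,\dd|\mu|$. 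Together these give $|\mu_j|\wsc|\mu|$ in $\mbfM^+(\Omega\times\mbR^m)$ and, by a standard bump-function argument, tightness of $(|\mu_j|)$.

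Next, the positive measures $\pi_{\mu_j}\in\mbfM^+(Y)$ have bounded total mass $|\mu_j|(\Omega\times\mbR^m)$ and are tight on $Y$, tightness of $(|\mu_j|)$ being inherited by $(\pi_{\mu_j})$ because $\pd\mathbb{B}^{m\times d}$ is compact. Prokhorov's theorem then furnishes, from every subsequence, a further subsequence satisfying $\pi_{\mu_{j_k}}\wsc\pi_\infty$ for some $\pi_\infty\in\mbfM^+(Y)$ whose projection onto $\Omega\times\mbR^m$ is $|\mu|$. Disintegrating $\pi_\infty = |\mu|\otimes\sigma^{(x,y)}$ with probability kernel $\sigma^{(x,y)}$ supported on $\pd\mathbb{B}^{m\times d}$, I would identify the barycentre of $\sigma^{(x,y)}$ by testing against $\Phi(x,y,A) = \phi(x,y)(A\cdot C)$ with compactly supported $\phi$ and $C\in\mbR^{m\times d}$: since $\int\Phi\,\dd\pi_{\mu_{j_k}} = \int\phi\,C\cdot\dd\mu_{j_k}\to\int\phi\,C\cdot\dd\mu$, varying $\phi$ and $C$ pins down the barycentre of $\sigma^{(x,y)}$ as $\frac{\dd\mu}{\dd|\mu|}(x,y)$ for $|\mu|$-a.e.\ $(x,y)$.

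The crux of the argument, and the step I expect to be the main obstacle, is the passage from identification of the barycentre to identification of the whole kernel $\sigma^{(x,y)}$. The relevant observation is that $\sigma^{(x,y)}$ is a probability measure supported on the Euclidean sphere $\pd\mathbb{B}^{m\times d}$ whose barycentre $\frac{\dd\mu}{\dd|\mu|}(x,y)$ itself has unit Euclidean norm ($|\mu|$-a.e.); strict convexity of the Euclidean unit ball then forces $\sigma^{(x,y)} = \delta_{\frac{\dd\mu}{\dd|\mu|}(x,y)}$. Consequently $\pi_\infty = \pi_\mu$, so the subsequence principle promotes the convergence to $\pi_{\mu_j}\wsc\pi_\mu$ along the full sequence, which together with $\pi_{\mu_j}(Y)\to\pi_\mu(Y)$ upgrades to narrow convergence on the locally compact space $Y$. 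Evaluating against the bounded continuous $f$ then concludes the proof.
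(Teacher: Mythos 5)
The paper does not prove this statement: it is quoted as a standard result, with proofs referenced in~\cite{AmFuPa00FBVF} and~\cite{Spec11SPRR}. Your argument is correct and is essentially the classical proof from~\cite{AmFuPa00FBVF} (Theorem~2.39 there): lift to $|\mu|\otimes\delta_{\frac{\dd\mu}{\dd|\mu|}}$ on $\Omega\times\mbR^m\times\pd\mathbb{B}^{m\times d}$, extract a narrow limit by tightness, identify the disintegrated kernel as a Dirac mass via the barycentre computation and the strict convexity of the Euclidean ball (using that $\frac{\dd\mu}{\dd|\mu|}$ has unit modulus $|\mu|$-a.e.), and conclude by the subsequence principle together with the standard upgrade from weak* convergence plus convergence of masses to narrow convergence of positive measures. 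All the delicate points --- tightness up to $\pd\Omega$ via the bump-function argument, and the weak*-to-narrow upgrade --- are correctly flagged and handled, so there is nothing to add.
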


Given a measure $\mu\in\mbfM^+(\Omega)$, we say that $\nu$ is a \textbf{$\mu$-measurable $\mbfM(\mbR^m;\mbR^{m\times d})$-valued map} or \textbf{parametrised measure} if $\nu\colon x\mapsto\nu_x$ is a function $\nu\colon\Omega\to\mbfM(\mbR^m;\mbR^{m\times d})$ such that the map $x\mapsto\nu_x(A)$ is $\mu$-measurable for every fixed $A\in\mathcal{B}(\mbR^m)$.

If $\mu\in\mbfM^+(\Omega)$ and $\nu\colon\Omega\to\mbfM(\mbR^m;\mbR^{m\times d})$ is a $\mu$-measurable parametrised measure, we can define the \textbf{generalised product} $\mu\otimes\nu\in\mbfM(\Omega\times\mbR^m;\mbR^{m\times d})$ of $\mu$ and $\nu$ by its action on elements of $\C_0(\Omega\times\mbR^m)$:
\[
\int_{\Omega\times\mbR^m}\varphi(x,y)\;\dd(\mu\otimes\nu)(x,y):=\int_\Omega\left(\int_{\mbR^m}\varphi(x,y)\;\dd\nu_x(y)\right)\;\dd\mu(x).
\]
The following theorem lets us decompose a measure defined on $\Omega\times\mbR^m$ into a generalised product involving the projection of its total variation.
\begin{theorem}[Disintegration of measures]
 Let $\eta\in\mbfM(\Omega\times\mbR^m;\mbR^{m\times d})$ and let $\pi\colon\Omega\times \mbR^m\to \Omega$ be the projection
operator defined by $\pi(x,y)=x$ for $(x,y)\in\Omega\times\mbR^m$. Then there exists a $\pi_\sharp|\eta|$-almost everywhere unique
measure-valued map $\nu\colon\Omega\to \mbfM(\mbR^m;\mbR^{m\times d})$ such that each
$|\nu_x|$ is a probability measure and
$\eta=\left(\pi_\sharp|\eta|\right)\otimes\nu$, where $\pi_\sharp|\eta|\in\mbfM(\Omega\times\mbR^m;\mbR^{m\times d})$ is uniquely defined by $\pi_\sharp|\eta|(A\times B):=|\eta|(A)$. Furthermore,
$|\eta|=\left(\pi_\sharp|\eta|\right)\otimes|\nu|$ (where $|\nu|$ is defined by
$|\nu|_x=|\nu_x|$) and, up to scaling, this is the only way of factoring $\nu$ over $\Omega$ and $\mbR^m$: if $\eta=\xi\otimes\nu$ where $\xi\in\mbfM^+(\Omega)$ and $|\nu|\colon F\to\mbfM^1(\mbR^m)$, then $\pi_\sharp|\eta|=\xi$.
\end{theorem}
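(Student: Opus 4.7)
The plan is to reduce the vector-valued case to the classical positive disintegration via the polar decomposition. I would set $\mu := \pi_\sharp|\eta| \in \mbfM^+(\Omega)$ and $g := \dd\eta/\dd|\eta| \in \Lp^1(|\eta|;\mbR^{m\times d})$, which satisfies $|g|=1$ $|\eta|$-almost everywhere. The strategy then has two stages: first, disintegrate the positive measure $|\eta|$ as a generalised product $\mu \otimes \lambda$ with each $\lambda_x$ a probability measure on $\mbR^m$; second, transfer the factorisation to $\eta$ by multiplying by $g$.

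For the positive disintegration, I would take the dual approach. For each nonnegative $\varphi \in \C_0(\mbR^m)$, the Borel measure $E \mapsto \int_{E \times \mbR^m}\varphi(y)\,\dd|\eta|(x,y)$ on $\Omega$ is absolutely continuous with respect to $\mu$, so Radon--Nikodym produces a density $T_\varphi \in \Lp^1(\mu)$ with $0 \le T_\varphi \le \|\varphi\|_\infty$ and $T_1 \equiv 1$. Choosing a countable $\mathbb{Q}$-linear dense subset $\{\varphi_n\} \subset \C_0(\mbR^m)$ and discarding a single $\mu$-null set on which finitely many of the Radon--Nikodym identities might fail, one obtains for $\mu$-a.e.\ $x$ a positive bounded $\mathbb{Q}$-linear functional $\varphi_n \mapsto T_{\varphi_n}(x)$, which extends by density to a positive bounded linear functional on $\C_0(\mbR^m)$. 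Riesz representation then delivers a probability measure $\lambda_x$ with $T_\varphi(x) = \int\varphi\,\dd\lambda_x$, and a monotone class argument upgrades the Radon--Nikodym identities to the full factorisation $|\eta| = \mu \otimes \lambda$.

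Next I would define $\nu_x := g(x,\frarg)\,\lambda_x$. Using $\eta = g\,|\eta|$ together with $|\eta| = \mu \otimes \lambda$, for any $\varphi \in \C_0(\Omega \times \mbR^m)$ one computes $\int\varphi\,\dd\eta = \int \varphi g \,\dd|\eta| = \int\bigl(\int\varphi(x,y)g(x,y)\,\dd\lambda_x(y)\bigr)\,\dd\mu(x)$, establishing $\eta = \mu \otimes \nu$. Since $|g|=1$ $(\mu\otimes\lambda)$-a.e., Fubini gives $|g(x,\frarg)|=1$ $\lambda_x$-a.e.\ for $\mu$-a.e.\ $x$, so $|\nu_x| = \lambda_x$ is a probability measure and $|\eta| = \mu \otimes |\nu|$ as claimed.

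Finally, for the rigidity statement, suppose $\eta = \xi \otimes \nu'$ with $\xi \in \mbfM^+(\Omega)$ and each $|\nu'_x|(\mbR^m) = 1$. Testing against $\mbR^{m\times d}$-valued $\psi \in \C_0(\Omega\times\mbR^m)$ with $|\psi| \le 1$, using $\int\psi\cdot\dd\eta = \int\int\psi(x,y)\cdot\dd\nu'_x(y)\,\dd\xi(x)$, and taking the pointwise supremum over $\psi$ (after reducing to a countable family by separability) yields $|\eta| = \xi \otimes |\nu'|$. Pushing forward via $\pi$ and using $|\nu'_x|(\mbR^m) = 1$ $\xi$-a.e.\ then gives $\pi_\sharp|\eta| = \xi$, i.e.\ $\xi = \mu$, after which essential uniqueness of $\nu$ follows from uniqueness of $g$ and of the positive disintegration. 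The main obstacle is the construction of $\lambda_x$: ensuring that a \emph{single} $\mu$-null set captures all the failures of positivity, linearity, total mass one, and the Radon--Nikodym identity simultaneously, which forces the restriction to a countable dense family of test functions; the remaining steps are routine applications of Fubini, Riesz representation, and the monotone class theorem.
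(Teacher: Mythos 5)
The paper does not actually prove this statement---it is quoted as a standard preliminary with the reference \cite{AmFuPa00FBVF} (Theorem~2.28 and Corollary~2.29 there)---and your argument is precisely the proof given in that reference: disintegrate the positive measure $|\eta|$ via Radon--Nikodym densities $T_\varphi$ over a countable dense family of test functions together with Riesz representation, then transfer the factorisation to $\eta$ by multiplying by the polar $g=\dd\eta/\dd|\eta|$, where $|g|=1$ $|\eta|$-a.e.\ forces $|\nu_x|=\lambda_x$. Your proposal is correct, and the only steps left schematic (exchanging the supremum with the integral to get $|\xi\otimes\nu'|=\xi\otimes|\nu'|$, and collecting all exceptional sets into a single $\mu$-null set) are exactly the routine points you already flag.
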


The standard reference for all of the above is~\cite{AmFuPa00FBVF}, although we note that new proofs of Reshetnyak's theorems can be found in~\cite{Spec11SPRR}.
\subsection{Facts about $\BV(\Omega;\mbR^m)$}
Recall that the function space $\BV(\Omega;\mbR^m)$ is defined as the space of $\Lp^1(\Omega;\mbR^m)$ functions whose distributional derivatives are measures in $\mbfM(\Omega;\mbR^{m\times d})$. For a given $\BV(\Omega;\mbR^m)$ function $u$, the domain $\Omega$ admits the following decomposition into disjoint sets:
\[
 \Omega=\mathcal{D}_u\cup\mathcal{J}_u\cup\mathcal{C}_u\cup\mathcal{N}_u,
\]
where $\mathcal{D}_u$ denotes the set of points at which $u$ is approximately differentiable, $\mathcal{J}_u$ denotes the set of jump points of $u$, $\mathcal{C}_u$ denotes the set of points where $u$ is approximately continuous but not approximately differentiable and $\mathcal{N}_u$ is a set satisfying $\mathcal{H}^{d-1}(\mathcal{N}_u)=0$, where $\mathcal{H}^{d-1}$ is the $(d-1)$-dimensional \textbf{Hausdorff measure}. We have that $\mathcal{L}^d(\Omega\setminus\mathcal{D}_u)=0$ and that $\mathcal{J}_u$ is a $\mathcal{H}^{d-1}$-rectifiable set. The derivative $Du$ of $u$ can then be written as a sum of mutually singular measures,
\[
 Du=\nabla u\mathcal{L}^{d}+D^j u+ D^c u=\nabla u\mathcal{L}^d+D^s u,
\]
where $\nabla u$ is the approximate derivative of $u$, $D^ju=Du\restrict\mathcal{J}_u$, $D^c u=Du\restrict\mathcal{C}_u$ and $D^s u=D^ju+D^cu$. An important consequence of this decomposition is the fact that
\[
 \mathcal{H}^{d-1}(B)=0\implies|Du|(B)=0.
\]
Recall also that, viewed as an element of $\Lp^1(\Omega;\mbR^m)$, $u$ admits a representative, $\tilde{u}$, known as the \textbf{precise representative} which is approximately continuous $\mathcal{H}^{d-1}$-almost everywhere in $\Omega$.

\begin{definition}[The jump averaging function]\label{jumpaverage}
For a given function $u\in \BV\left(\Omega;\mbR^m\right)$, define its
\textbf{jump averaging function}, $u^\theta\colon \Omega\times [0,1]\to\mbR^m$, for $\mathcal{H}^{d-1}$-almost every $x\in\Omega$, by
\begin{equation*}
u^\theta(x):=
\begin{cases}
\theta u^+(x)+(1-\theta)u^-(x) & \text{ for }x\in\mathcal{J}_u, \\
\tilde{u}(x) & \text{ for }x\in\mathcal{D}_u\cup\mathcal{C}_u,
\end{cases}
\end{equation*}
where $u^+$ and $u^-$ are the upper and lower limits of $u$ on $\mathcal{J}_u$ and $\tilde{u}$ is the precise representative.
\end{definition}
Strictly, $u^\theta$ is ill-defined, since $u^+$, $u^-$ are only defined up to
permutation. However, we will only make use of $u^\theta$ in expressions of the
form
\[
 \int_0^1 g\left(u^\theta(x)\right)\;\dd\theta
\]
which are invariant under our choice of $u^+,u^-$, so no issues will arise from this
ambiguity.

\begin{definition}\label{volpert}
Given a continuously differentiable Lipschitz function $f\colon\mbR^m\to\mbR^n$ and a function $u\in
\BV\left(\Omega;\mbR^m\right)$, we define the \textbf{Vol'pert averaged
superposition}, $\overline{f}_u$, of $f$ by
\[
 \overline{f}_u(x):=\int_0^1 \nabla f\left(u^\theta(x)\right)\;\dd\theta.
\]
\end{definition}

\begin{theorem}[The chain rule in $\BV(\Omega;\mbR^m)$]\label{chainrule}
 Let $f\colon\mbR^m\to\mbR^n$ be continuously differentiable and Lipschitz and let $u\in
\BV\left(\Omega;\mbR^m\right)$. Then $v:=f\circ u\in
\BV\left(\Omega;\mbR^n\right)$ and $Dv$ is given by
\begin{align*}
\nabla v\,\mathcal{L}^d\restrict\Omega & =(\nabla f)(\tilde{u})\,\nabla
u\,\mathcal{L}^d\restrict\Omega, \\
D^j v &=\left(f(u^+)-f(u^-)\right)\otimes\tau_u\,\mathcal{H}^{d-1}\restrict\mathcal{J}_u,
\\
D^c v & =\nabla f\left(\tilde{u}\right)D^c u,
\end{align*}
where $\tau_u$ is the \textbf{jump direction} of $u$: that is, the orientation vector of $\mathcal{J}_u$ (see, for example,~\cite{AmFuPa00FBVF}, Theorem~3.78).
\end{theorem}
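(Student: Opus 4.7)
The plan is to first observe that $v := f\circ u$ belongs to $\BV(\Omega;\mbR^n)$: the Lipschitz bound $|v(x)-v(y)|\leq\mathrm{Lip}(f)|u(x)-u(y)|$ directly yields $|Dv|(\Omega)\leq\mathrm{Lip}(f)|Du|(\Omega)$, so the Lebesgue/jump/Cantor decomposition $Dv = \nabla v\,\mathcal{L}^d + D^j v + D^c v$ makes sense, and it suffices to identify each piece separately using the mutual singularity of the three parts.

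For the absolutely continuous part, I would use that $u$ is approximately differentiable at $\mathcal{L}^d$-a.e.\ $x\in\mathcal{D}_u$, with approximate derivative $\nabla u(x)$ and precise value $\tilde u(x)=u(x)$. The classical chain rule for approximately differentiable functions (valid because $f$ is of class $\mathrm{C}^1$ and Lipschitz, hence differentiable in the ordinary sense at $\tilde u(x)$) then gives $\nabla v(x) = \nabla f(\tilde u(x))\nabla u(x)$ pointwise a.e. For the jump part, the continuity of $f$ forces $\mathcal{J}_v\subseteq\mathcal{J}_u$, and at each $x\in\mathcal{J}_u$ the one-sided approximate limits of $v$ along the jump normal are $v^\pm(x)=f(u^\pm(x))$ while the jump direction $\tau_v$ equals $\tau_u$ $\mathcal{H}^{d-1}$-a.e.\ on $\mathcal{J}_u$ (since it is determined by the rectifiable structure of the jump set). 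The general representation of $D^j v$ on a $\mathcal{H}^{d-1}$-rectifiable set then yields the claimed identity.

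For the Cantor part, the plan is to mollify: set $u_\varepsilon := u*\rho_\varepsilon \in \mathrm{C}^\infty$ and $v_\varepsilon := f\circ u_\varepsilon$, so that the classical chain rule gives $Dv_\varepsilon = \nabla f(u_\varepsilon)\,\nabla u_\varepsilon\,\mathcal{L}^d$. Using that $u_\varepsilon\to u$ area-strictly and $u_\varepsilon\to\tilde u$ pointwise $\mathcal{H}^{d-1}$-a.e.\ on $\Omega\setminus\mathcal{J}_u$, together with the continuity of $\nabla f$, one passes to the limit to obtain $Dv = \nabla f(\tilde u)\,Du$ on $\mathcal{D}_u\cup\mathcal{C}_u$. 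Restricting to $\mathcal{C}_u$ (where $\mathcal{L}^d$ vanishes by approximate continuity failure and $\mathcal{H}^{d-1}$ is $\sigma$-finite but carries no $D^c u$ mass) isolates the Cantor piece $D^c v = \nabla f(\tilde u)D^c u$.

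The main obstacle will be the Cantor part, specifically justifying the limit passage at $|D^c u|$-a.e.\ point where $u$ has no classical or even approximate derivative. The key device is that $|D^c u|$ vanishes on $\mathcal{H}^{d-1}$-null sets and in particular on $\mathcal{J}_u$, so the precise representative $\tilde u$ is defined $|D^c u|$-a.e., and one can combine a blow-up argument at a $|D^c u|$-a.e.\ point $x_0$ with the first-order expansion $f(y)=f(\tilde u(x_0))+\nabla f(\tilde u(x_0))(y-\tilde u(x_0))+o(|y-\tilde u(x_0)|)$ to obtain the stated Radon--Nikodym density of $D^c v$ with respect to $D^c u$. Care is needed to show that the $o(\cdot)$-term contributes nothing in the measure-theoretic limit, which ultimately reduces to the approximate continuity of $\tilde u$ with respect to $|D^c u|$.
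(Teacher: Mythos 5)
The paper does not actually prove this statement: it is quoted as a known result (Vol'pert's chain rule, Theorem~3.96 in~\cite{AmFuPa00FBVF}), so there is no in-paper argument to compare against, and your proposal amounts to a sketch of the standard textbook proof. The parts you handle are handled correctly: the membership $v\in\BV(\Omega;\mbR^n)$ via the Lipschitz estimate, the absolutely continuous part via the chain rule for approximately differentiable maps, and the jump part via the Federer--Vol'pert structure theorem (one-sided traces $v^\pm=f(u^\pm)$, unchanged jump direction $\tau_v=\tau_u$).

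The Cantor part is where there is a genuine gap. The mollification step as written would fail: area-strict convergence $u_\varepsilon\to u$ does give $Dv_\varepsilon=\nabla f(u_\varepsilon)\nabla u_\varepsilon\,\mathcal{L}^d\wsc Dv$, but identifying the weak* limit of the right-hand side with $\nabla f(\tilde u)\,Du$ is itself a Reshetnyak-type continuity statement for a nonlinear function of the pair $(u_\varepsilon,Du_\varepsilon)$; pointwise convergence of $u_\varepsilon$ $\mathcal{H}^{d-1}$-a.e.\ plus weak* convergence of $Du_\varepsilon$ does not allow passage to the limit in the product --- overcoming exactly this difficulty is what the liftings of Section~3 are for, so you cannot assume it here. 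Your fallback blow-up argument --- writing $f(y)=f(\tilde u(x_0))+\nabla f(\tilde u(x_0))(y-\tilde u(x_0))+\sigma(y)$ with $\mathrm{Lip}\bigl(\sigma;B_\rho(\tilde u(x_0))\bigr)\to0$ as $\rho\to0$, so that $D(\sigma\circ u)$ is negligible near $x_0$ --- is the correct route, but its crux is the claim that $\tilde u$ is approximately continuous \emph{with respect to} $|D^cu|$ at $|D^cu|$-a.e.\ point, i.e.\ that $|Du|\bigl(B_r(x_0)\cap\{|\tilde u-\tilde u(x_0)|>\rho\}\bigr)/|Du|(B_r(x_0))\to0$. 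Since $|D^cu|\perp\mathcal{L}^d$, ordinary approximate continuity gives no control here; this fine property is the real content of the Cantor-part argument and you assert it without any indication of proof. Two smaller slips: $\mathcal{L}^d(\mathcal{C}_u)=0$ holds because $u$ is approximately differentiable $\mathcal{L}^d$-a.e., not because approximate continuity fails on $\mathcal{C}_u$ (it holds there by definition); and the relevant fact is that $|D^cu|$ vanishes on sets that are $\sigma$-finite with respect to $\mathcal{H}^{d-1}$, not that ``$\mathcal{H}^{d-1}$ carries no $D^cu$ mass.''
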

Since, for $x\in\mathcal{J}_u$,
\begin{equation}\label{eqchainruleidentity}
\frac{\dd}{\dd\theta}f\left(u^\theta(x)\right)=\nabla
f\left(u^\theta(x)\right)\left(u^+(x)-u^-(x)\right),
\end{equation}
we can summarise
the chain rule in $\BV(\Omega;\mbR^m)$ concisely as the statement that
\begin{equation*}
 D\left(f\circ u\right)=\overline{f}_uDu.
\end{equation*}

\begin{theorem}[Rellich--Kondrachov]
When both spaces are endowed with their norm topology, the embedding $\BV(\Omega;\mbR^m)\hookrightarrow \Lp^p(\Omega;\mbR^m)$ for $p\in[1,1^*]$ is continuous. For $p\in[1,1^*)$, the embedding is compact.
\end{theorem}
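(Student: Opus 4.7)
The plan is to treat continuity first, then compactness, relying at each stage on the density (in the strict or norm sense) of smooth functions in $\BV(\Omega;\mbR^m)$ together with the classical Sobolev inequalities for $\W^{1,1}$. Throughout I make use of the existence of a bounded extension operator $\BV(\Omega;\mbR^m)\to\BV(\mbR^d;\mbR^m)$, which is available because $\Omega$ has compact Lipschitz boundary.

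For the continuity statement, the endpoint $p=1$ is immediate from $\|u\|_{\Lp^1}\leq\|u\|_{\BV}$. The critical case $p=1^*$ is the content of the Gagliardo--Nirenberg--Sobolev inequality for $\BV$: the aim is to show
\begin{equation*}
\|u\|_{\Lp^{1^*}}\leq C\bigl(\|u\|_{\Lp^1}+|Du|(\Omega)\bigr)
\quad\text{for every }u\in\BV(\Omega;\mbR^m).
\end{equation*}
I would first prove this for $\C^\infty_c(\mbR^d;\mbR^m)$ by the standard slicing/iterated-one-dimensional-integration argument of Gagliardo and Nirenberg, then transfer it to $u\in(\W^{1,1}\cap\C^\infty)(\Omega;\mbR^m)$ via extension, and finally pass to general $u\in\BV(\Omega;\mbR^m)$ by choosing smooth approximants $u_j\in(\W^{1,1}\cap\C^\infty)(\Omega;\mbR^m)$ with $u_j\to u$ in $\Lp^1$ and $|Du_j|(\Omega)\to|Du|(\Omega)$ (strict density), invoking Fatou's lemma on the left. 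The intermediate cases $p\in(1,1^*)$ then follow by the log-convexity interpolation $\|u\|_{\Lp^p}\leq\|u\|_{\Lp^1}^{1-\theta}\|u\|_{\Lp^{1^*}}^{\theta}$ with $\theta\in(0,1)$ determined by $1/p=(1-\theta)+\theta/1^*$.

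For compactness at any $p\in[1,1^*)$, I would establish compactness in $\Lp^1$ first and then upgrade via interpolation. Given a norm-bounded sequence $(u_j)\subset\BV(\Omega;\mbR^m)$, extend to $\mbR^d$ with uniformly bounded $\BV$-norms, then use the translation estimate
\begin{equation*}
\|u_j(\frarg+h)-u_j(\frarg)\|_{\Lp^1(\mbR^d;\mbR^m)}\leq |h|\,|Du_j|(\mbR^d),
\end{equation*}
(proved first for smooth functions and extended by strict approximation). Combined with uniform $\Lp^1$-boundedness and tightness coming from the compact support of the extensions, the Fréchet--Kolmogorov--Riesz criterion yields a subsequence converging in $\Lp^1(\Omega;\mbR^m)$. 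To get convergence in $\Lp^p$ for $1<p<1^*$, interpolate: with $\theta\in(0,1)$ as above,
\begin{equation*}
\|u_j-u_k\|_{\Lp^p}\leq\|u_j-u_k\|_{\Lp^1}^{1-\theta}\|u_j-u_k\|_{\Lp^{1^*}}^{\theta},
\end{equation*}
and use that the second factor is uniformly bounded (by the continuous embedding) while the first tends to $0$ along the extracted subsequence.

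The main obstacle is the critical continuous embedding into $\Lp^{1^*}$, which is sharp and cannot be obtained by soft interpolation or compactness arguments; it hinges on the explicit Gagliardo--Nirenberg--Sobolev inequality and a careful passage to the limit under strict convergence. This is also precisely why the compactness cannot be extended to $p=1^*$: the interpolation argument breaks down at $\theta=1$, and indeed concentration phenomena (as used later in the paper via Lions' concentration-compactness) obstruct compactness at the critical exponent.
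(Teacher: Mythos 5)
The paper quotes this theorem as a standard background fact and supplies no proof of its own, so there is nothing internal to compare against; your argument is the classical textbook proof (Gagliardo--Nirenberg--Sobolev for the critical exponent, extension plus strict approximation and Fatou to pass from smooth functions to general $\BV$, the Fr\'echet--Kolmogorov criterion via the translation estimate for $\Lp^1$-compactness, and log-convexity interpolation for the intermediate exponents), and it is correct. The one point worth flagging is the case $d=1$, where $1^*=\infty$: the slicing argument for the Sobolev inequality is specific to $d\ge 2$, and the endpoint embedding $\BV(\Omega;\mbR^m)\hookrightarrow\Lp^\infty(\Omega;\mbR^m)$ must instead be obtained directly from the one-dimensional structure (essentially the fundamental theorem of calculus for $\BV$ functions on an interval); with that substitution your interpolation and compactness steps go through unchanged.
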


\subsection{Strict and area-strict convergence}
We will now introduce two metrics on $\BV(\Omega;\mbR^m)$, the strict metric and the area-strict metric. Our interest in these two metrics stems from the fact that they induce a topology which is stronger than the weak* topology on $\BV(\Omega;\mbR^m)$, yet with respect to which $\W^{1,1}(\Omega;\mbR^m)$ and $\C^\infty(\Omega;\mbR^m)$ functions are dense.
\begin{definition}[Strict convergence]
 We say that a sequence $(u_j)\subset\BV(\Omega;\mbR^m)$ converges \textbf{strictly} to $u\in\BV\left(\Omega;\mbR^m\right)$ if it does so with respect to the metric
\begin{equation*}
 d(u,v):=\norm{u-v}_{\Lp^1}+||Du|\left(\Omega\right)-|Dv|\left(\Omega\right)|,
\end{equation*}
so that $(u_j)$ converges strictly to $u$ in $\BV$ if and only if $u_j\to u$ in
$\Lp^1$ and $|Du_j|\left(\Omega\right)\to |Du|\left(\Omega\right)$.
\end{definition}
Strictly convergent sequences are norm-bounded in $\BV(\Omega;\mbR^m)$, which implies that
they have weakly* convergent subsequences. Using the sequential weak* compactness of bounded sets in $\BV(\Omega;\mbR^m)$, we deduce that strict convergence of a sequence $(u_j)$ in $\BV\left(\Omega;\mbR^m\right)$ implies strict convergence of $(Du_j)$ in $\mbfM\left(\Omega;\mbR^m\right)$, i.e., $Du_j\wsc Du$ and $|Du_j|(\Omega)\to|Du|(\Omega)$. 
 
\begin{definition}[Area-strict convergence]
 Define \textbf{area-strict convergence} on $\BV(\Omega;\mbR^m)$ via the metric
\[
 d(u,v):=\norm{u-v}_{\Lp^1}+\left|\int_\Omega\sqrt{1+|\nabla u|^2}\;\dd x+|D^su|\left(\Omega\right)-\left(\int_\Omega\sqrt{1+|\nabla
v|^2}\;\dd x +|D^sv|\left(\Omega\right)\right)\right|.
\]
\end{definition}
Area-strict convergence is necessary for Theorem~\ref{unbddcontinuitythm} to hold: note that if the conclusion to Theorem~\ref{unbddcontinuitythm} holds with integrands $f_1(x,y,A):=|y|^p$ and $f_\varphi (x,y,A):=\varphi(x)\cdot y$ (where $\varphi\in \C_0(\Omega;\mbR^m)$ is arbitrary), then convergence of the associated functionals $\mathcal{F}_1[u_j]\to\mathcal{F}_1[u]$ and $\mathcal{F}_\varphi[u_j]\to\mathcal{F}_\varphi[u]$ implies that $\norm{u_j}_p\to\norm{u}_p$ and that $u_j\rightharpoonup u$ in $\Lp^p(\Omega;\mbR^m)$ for $p\in[1,1^*]$ ($p\in[1,1^*)$ for $d=1$). It follows from uniform convexity of $\Lp^p(\Omega;\mbR^m)$ for $p>1$ that $u_j\to u$ in $\Lp^p(\Omega;\mbR^m)$ (see~\cite{BreHaiFSP}, Proposition 3.32) and hence in $\Lp^1(\Omega;\mbR^m)$. Now all that remains is to let $f_2=\sqrt{1+|A|^2}$ and apply the conclusion of Theorem~\ref{unbddcontinuitythm} to the associated functional $\mathcal{F}_2$. 

It is an immediate consequence of Theorem~\ref{unbddcontinuitythm} (with the function $f=|A|$) that area-strict convergence implies strict convergence. The following example shows that the converse is not true.
\begin{example}\label{areastrictbeatsstrict} The sequence $u_j(x)=x+(2\pi j)^{-1}\sin(2\pi j x)$ defined on $(0,1)$, converges strictly to $u=x$ but not area-strictly. To see this, note that, since $u'_j\geq 0$,
\[
 |Du_j|(0,1)=\int_0^1 u'_j(x)\;\dd x=\int_0^1 1+\sin(2\pi j x)\;\dd x \to 1,
\]
whilst, because the function $x\mapsto\sqrt{x}$ is concave on $\mbR^+$,
\[
 \int_0^1\sqrt{1+|u'_j(x)|^2}\;\dd x\geq\frac{1}{2}\int_0^1\sqrt{2}\;\dd x+\frac{1}{2}\int_0^1\sqrt{2|u'_j(x)|^2}\;\dd x=\frac{1}{\sqrt{2}}+\frac{1}{\sqrt{2}}\int u'_j(x)\;\dd x\to \sqrt{2}.
\]
\end{example}
\begin{proposition}
 Under the topology induced by area-strict convergence, $\C^\infty(\Omega;\mbR^m)$ (and hence also $\W^{1,1}(\Omega;\mbR^m)$) is dense in $\BV(\Omega;\mbR^m)$.
\end{proposition}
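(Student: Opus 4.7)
The approach I would take is via mollification combined with the observation that area-strict convergence is really the strict convergence of an augmented measure. Specifically, for $u \in \BV(\Omega;\mbR^m)$ consider the $\mbR^{1+m\times d}$-valued measure
\[
\eta_u := \bigl(\mathcal{L}^d\restrictsmall\Omega,\, Du\bigr).
\]
Since $\nabla u\,\mathcal{L}^d$ and $D^su$ are mutually singular, the Lebesgue decomposition gives
\[
|\eta_u|(\Omega) = \int_\Omega \sqrt{1+|\nabla u|^2}\,\dd x + |D^s u|(\Omega),
\]
so area-strict convergence $u_j\to u$ in $\BV(\Omega;\mbR^m)$ is equivalent to $u_j \to u$ in $\Lp^1$ together with $|\eta_{u_j}|(\Omega)\to|\eta_u|(\Omega)$. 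Recasting in this form makes the statement amenable to the standard strict-convergence machinery for measures.

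First I would fix $u\in\BV(\Omega;\mbR^m)$ and construct the approximants $u_\varepsilon\in\C^\infty(\Omega;\mbR^m)$ via an Anzellotti--Giaquinta-type mollification: choose a locally finite open cover $\{U_k\}$ of $\Omega$ together with a subordinate partition of unity $\{\varphi_k\}$, select convolution scales $\varepsilon_k>0$ small enough that $\rho_{\varepsilon_k}*(\varphi_k u)$ has support inside $\Omega$, and set $u_\varepsilon := \sum_k \rho_{\varepsilon_k}*(\varphi_k u)$. Standard arguments give $u_\varepsilon \to u$ in $\Lp^1(\Omega;\mbR^m)$, and hence $Du_\varepsilon \wsc Du$. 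Since $\mathcal{L}^d\restrictsmall\Omega$ is preserved, we obtain $\eta_{u_\varepsilon}\wsc \eta_u$ in $\mbfM(\Omega;\mbR^{1+m\times d})$, and Reshetnyak's lower semicontinuity theorem (applied to the norm integrand) yields
\[
|\eta_u|(\Omega) \leq \liminf_{\varepsilon\to 0}|\eta_{u_\varepsilon}|(\Omega).
\]

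The substantive step is the matching upper bound. Here I would apply Jensen's inequality to the convex, positively $1$-homogeneous function $(t,A)\mapsto \sqrt{t^2+|A|^2}$: for $x$ away from $\pd\Omega$, the vector $(1,\nabla u_\varepsilon(x))$ is an average against $\rho_{\varepsilon_k}$ of $(1,\dd Du/\dd|Du|)$ weighted by $|Du|$-mass, which after Fubini integrates to at most $|\eta_u|\bigl(U_k^{\varepsilon_k}\bigr)$ on each chart, where $U_k^{\varepsilon_k}$ denotes the $\varepsilon_k$-neighbourhood of $U_k$. Summing over $k$ and letting the $\varepsilon_k\to 0$ in a way that matches the local finiteness of the cover controls the overlap contributions by $o(1)$ and produces
\[
\limsup_{\varepsilon\to 0}|\eta_{u_\varepsilon}|(\Omega) \leq |\eta_u|(\Omega).
\]

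The main obstacle is the boundary analysis: plain interior mollification would satisfy the Jensen bound automatically, but to keep $u_\varepsilon$ in $\C^\infty(\Omega;\mbR^m)$ (and to get strict, not just weak*, convergence up to $\pd\Omega$) one must use the partition of unity, and then the overlap of the supports of $\varphi_k$ causes double counting of $|\eta_u|$-mass. Controlling this requires choosing the scales $\varepsilon_k$ adaptively depending on the local mass of $|\eta_u|$ near $\pd\Omega$ (which is finite), together with the Lipschitz character of $\pd\Omega$ to ensure the collar neighbourhoods have vanishing $|\eta_u|$-mass in the limit. All other ingredients---weak* convergence, lower semicontinuity and the Jensen bound---are essentially formal once this bookkeeping is in place.
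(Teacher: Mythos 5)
Your argument is correct and coincides with the standard Anzellotti--Giaquinta mollification proof that the paper itself defers to (it cites Bildhauer rather than proving the proposition): recast area-strict convergence as strict convergence of the augmented measure $\eta_u=(\mathcal{L}^d\restrict\Omega,Du)$, get the lower bound from weak* lower semicontinuity of the total variation, and the upper bound from Jensen's inequality for the convolution of a measure with a mollifier. The only imprecision is in your diagnosis of the error term: since $\sum_k\varphi_k\equiv 1$, summing the chartwise bounds $\int\varphi_k\,\dd|\eta_u|$ yields exactly $|\eta_u|(\Omega)$ with no double counting, and what actually has to be made small by shrinking the $\varepsilon_k$ is the commutator $\rho_{\varepsilon_k}*(u\otimes\nabla\varphi_k)-u\otimes\nabla\varphi_k$ (which tends to $0$ in $\Lp^1$ for each $k$, and whose sum over $k$ is controlled because $\sum_k u\otimes\nabla\varphi_k=0$), together with the analogous discrepancy $\rho_{\varepsilon_k}*(\varphi_k\mathcal{L}^d)-\varphi_k\mathcal{L}^d$ in the first component.
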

A proof can be found in~\cite{Bild03CVP}. We note that area-strict convergence can be interpreted as requiring strict convergence of the graph $(x,u(x))$ of $u$. Although area-strict convergence is necessary for Theorem~\ref{unbddcontinuitythm} to hold, it is only used in the proof of Lemma~\ref{continuitythm}. For every other argument in this paper, strict convergence suffices.

\section{Liftings and a continuous embedding}\label{secliftings}

In this section, we will first define a map $\mu\colon \BV(\Omega;\mbR^m)\to\mbfM(\Omega\times\mbR^m;\mbR^{m\times d})$ assigning a lifting $\mu[u]$ to each $u\in \BV(\Omega;\mbR^m)$. Our interest in liftings stems from the fact that, for positively $1$-homogeneous integrands, they can be used to compute $\mathcal{F}$ and hence, after an application of Reshetnyak's Continuity Theorem, reduce the question of the strict continuity of $\mathcal{F}$ to that of the strict continuity of the map $u\mapsto\mu[u]$. In this context, liftings were first defined and studied in~\cite{JunJer04SCML}, where the authors also note that strict continuity of the map $u\mapsto\mu[u]$ implies strict convergence of $\mathcal{F}$ for positively $1$-homogeneous integrands. We will define liftings in a different (although equivalent) way and, as a consequence, provide a cleaner derivation of the properties of liftings that we require.

Second, we will prove an embedding result for $\BV(\Omega;\mbR^m)$ equipped with the strict topology which will be needed to prove Theorem~\ref{unbddcontinuitythm} for the critical case $p=1^*$.
\begin{definition}[Liftings]\label{lifting}
For $u\in \BV\left(\Omega;\mbR^m\right)$, define for $\mathcal{H}^{d-1}$-a.e. $x\in\Omega$ the measure $\nu_x\in\mbfM(\mbR^m;\mbR^{m\times d})$ via the Riesz Representation Theorem as the functional which acts on elements $\varphi\in \C_0(\mbR^m)$ as follows:
\begin{equation}\label{nuacts}
 \int_{\mbR^m}\varphi(y)\;\dd\nu_x(y)=\frac{\dd Du}{\dd|Du|}(x)\int_0^1\varphi(u^\theta\left(x\right))\;\dd\theta.
\end{equation}
Since the jump averaging function $u^\theta$ (see Definition~\ref{jumpaverage}) and the polar function $\frac{\dd D u}{\dd|Du|}$ are defined $\mathcal{H}^{d-1}$-almost everywhere and $|Du|$-almost everywhere respectively, we have that $\nu:=(\nu_x)$ is a $|Du|$-measurable parametrised measure. The \textbf{lifting} $\mu[u]\in \mbfM\left(\Omega\times\mbR^m;\mbR^{m\times d}\right)$ is then defined to be the generalised product
\begin{equation*}
 \mu[u]:=|Du|\otimes\nu.
\end{equation*}
\end{definition}

Clearly, $\frac{\dd\mu[u]}{\dd|\mu[u]|}=\frac{\dd Du}{\dd|Du|}$, so, for positively $1$-homogeneous $f$, it is easy to calculate
\begin{align*}
 \int_{\Omega\times\mbR^m}f\left(x,y,\frac{\dd\mu[u]}{\dd |\mu[u]|}(x,y)\right)\dd|\mu[u]|(x,y) & 
=\int_{\Omega}\int_{\mbR^m}f\left(x,y,\frac{\dd Du}{\dd|Du|}(x)\right)\dd|\nu_x|(y)\;\dd|Du|(x) \\ &
=\int_{\Omega}\int_0^1 f\left(x,u^\theta\left(x\right),\frac{\dd Du}{\dd|Du|}(x)\right)\dd\theta\;\dd|Du|(x).
\end{align*}
This expression simplifies further to
\begin{align*}
&\int_{\Omega}\int_0^1 f\left(x,u^\theta\left(x\right),\frac{\dd Du}{\dd|Du|}(x)\right)\dd\theta\;\dd|Du|(x)\\ &
\qquad=\int_\Omega f\left(x,u(x),\nabla u(x)\right)\;\dd x+\int_{\Omega}\int_0^1 f\left(x,u^\theta\left(x\right),\frac{\dd D^su}{\dd|D^su|}(x)\right)\dd\theta\;\dd|D^su|(x)=\mathcal{F}[u],
\end{align*}
which explains our interest in the measure $\mu[u]$. 

The following continuity lemma is crucial to our work. It was originally established in~\cite{JunJer04SCML} using results from~\cite{BrCoLi86HMD}, but we provide a streamlined, more direct, and shorter proof here.
\begin{lemma}\label{mugoodies}
 If $u_j\to u$ strictly in $\BV(\Omega;\mbR^m)$, then $\mu[u_j]\to\mu[u]$ strictly in $\mbfM(\Omega\times\mbR^m;\mbR^{m\times})$.
\end{lemma}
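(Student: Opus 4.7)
Strict convergence $\mu[u_j] \to \mu[u]$ consists of weak-$*$ convergence together with convergence of total variations. My plan is to treat both, in each case reducing the claim to a property of $Du$ itself via the structure of the lifting.

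The total variation statement reduces to the identity $|\mu[u]|(\Omega \times \mbR^m) = |Du|(\Omega)$ for every $u \in \BV(\Omega; \mbR^m)$. Unpacking Definition~\ref{lifting}, $\nu_x$ is the product of the polar $\frac{\dd Du}{\dd|Du|}(x) \in \mbR^{m \times d}$ (of unit norm for $|Du|$-a.e.\ $x$) with the push-forward of $\mathcal{L}^1 \restrict [0,1]$ under $\theta \mapsto u^\theta(x)$, which is a probability measure on $\mbR^m$. Hence $|\nu_x|(\mbR^m) = 1$ for $|Du|$-a.e.\ $x$, and integrating yields $|\mu[u]|(\Omega \times \mbR^m) = |Du|(\Omega)$. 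Strict convergence $|Du_j|(\Omega) \to |Du|(\Omega)$ then gives the mass convergence immediately.

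For weak-$*$ convergence, the key tool is the BV chain rule (Theorem~\ref{chainrule}). I would test $\mu[u]$ against matrix-valued functions $\varphi \in \C^1_c(\Omega \times \mbR^m; \mbR^{m \times d})$ with entries $\varphi(x,y)^{k,l} = G(x)_l\, \partial_k H(y)$ for some $G \in \C^1_c(\Omega; \mbR^d)$ and $H \in \C^2_c(\mbR^m)$. A direct computation using the definition of $\mu[u]$ together with Vol'pert averaging (Definition~\ref{volpert}) gives
\[
\int_{\Omega \times \mbR^m} \varphi \colon \dd\mu[u] = \int_\Omega G(x) \cdot \dd\bigl[D(H \circ u)\bigr](x) = -\int_\Omega (\mathrm{div}\, G)(x)\, H(u(x))\, \dd x,
\]
where the first equality is the chain rule and the second is integration by parts against the distributional derivative. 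Since $H$ is Lipschitz bounded and $u_j \to u$ in $\Lp^1$ by strict convergence, $H \circ u_j \to H \circ u$ in $\Lp^1$, and hence the right-hand side is continuous in $u$. Thus $\int \varphi \colon \dd\mu[u_j] \to \int \varphi \colon \dd\mu[u]$ on this class of test functions.

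The main obstacle is extending this convergence from these gradient-structured test functions to all of $\C_0(\Omega \times \mbR^m; \mbR^{m \times d})$: the vector fields $\nabla_y H(y)$ are curl-free, hence not dense when $m \geq 2$. My plan is to handle this via disintegration. Uniform mass bounds together with tightness of $(\mu[u_j])$ in the $y$-variable (the latter following from the strict continuity of the embedding $\BV(\Omega; \mbR^m) \hookrightarrow \Lp^{1^*}(\Omega; \mbR^m)$ to be established in the next section) reduce the question to identifying every weak-$*$ subsequential limit $\eta$ as $\mu[u]$. Since $\pi_\sharp \mu[u_j] = Du_j \wsc Du$ and the total variations match, the disintegration theorem yields $\eta = |Du| \otimes \tilde\nu$ with $\pi_\sharp |\eta| = |Du|$. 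Then specialising the above test functions to $H(y) = \tilde H(y_k)$ (annihilating all partials except $\partial_k$) isolates each entry of $\eta$, and varying $k$ separately recovers enough one-coordinate marginal information on the fibres to identify $\tilde\nu_x = \nu_x$ for $|Du|$-a.e.\ $x$.
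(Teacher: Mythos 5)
Your reduction of the total-variation statement to the identity $|\mu[u]|(\Omega\times\mbR^m)=|Du|(\Omega)$ is correct and is how the paper proceeds, and your derivation of the fibre-wise divergence identity via the $\BV$ chain rule is essentially the paper's computation with the test functional $Q_\varphi$. You also correctly locate the crux: for $m\geq 2$ the fields $\nabla_y H$ are curl-free and not dense, so the identity $\int\nabla g\,\dd\tilde\nu_x=\int\nabla g\,\dd\nu_x$ for all $g\in\C^1_0(\mbR^m)$ determines $\tilde\nu_x$ only up to a divergence-free matrix-valued measure. But your proposed resolution does not close this gap. Specialising to $H(y)=\tilde H(y_k)$ yields only the $(x,y_k)$-marginal of the $k$-th row of $\eta$, which is \emph{strictly less} information than the full divergence identity you already had; for $m\geq 2$ two distinct measures on $\mbR^m$ can share all such one-coordinate marginals (already for $m=2$, knowing the $y_1$-marginal of one component and the $y_2$-marginal of the other says nothing about the joint distributions), so this cannot identify $\tilde\nu_x$.

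The missing ingredient is the interplay between the divergence identity and the normalisation $|\tilde\nu_x|(\mbR^m)=1$ coming from the disintegration theorem; this is the content of the paper's Lemmas~\ref{lemdivmassonline} and~\ref{lemdivonlineidentify}, which have no counterpart in your plan. At a jump point the identity reads $-\nabla\cdot\tilde\nu_x=\frac{\delta_{u^+}-\delta_{u^-}}{|u^+-u^-|}\otimes\tau_u$; testing against mollified distance-type functions $g$ with $\norm{\nabla g}_\infty\leq 1$, $g(u^+)-g(u^-)=|u^+-u^-|$ and $\nabla g$ vanishing near a chosen point off the segment $[u^-,u^+]$ shows that at least unit mass of $\tilde\nu_x$ lies away from that point, and since the total mass is exactly one, $\tilde\nu_x$ is concentrated on the segment; a divergence-free matrix measure supported on a segment is then shown to vanish by a cut-off argument along the segment. (At points of approximate continuity a separate rescaling argument is needed to show $|\tilde\nu_x|=\delta_{u(x)}$; your proposal does not distinguish the two cases.) A secondary point: your appeal to tightness of $(\mu[u_j])$ via the $\Lp^{1^*}$ embedding is itself unproved and unnecessary — the paper instead shows $\pi_\sharp|\eta|\geq|Du|$ setwise from $\pi_\sharp\eta=Du$ and combines this with lower semicontinuity of the total mass to get $\pi_\sharp|\eta|=|Du|$, which rules out escape of mass without any tightness argument.
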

\begin{proof}
We have that $|\mu[u_j]|(\Omega\times\mbR^m)=\pi_\sharp|\mu[u_j]|(\Omega)=|Du_j|(\Omega)$ and so the sequence $(\mu[u_j])$ is bounded in $\mbfM\left(\Omega\times\mbR^m;\mbR^{m\times d}\right)$. By the sequential Banach--Alaoglu Theorem, $(\mu[u_j])$ admits a weakly* convergent subsequence, which we do not relabel. Denote the limit of this sequence by $\eta$. We will show that $\eta=\mu[u]$ and, since the argument will apply to any weakly* convergent subsequence of $(\mu[u_j])$, it must follow that $\mu[u_j]\to\mu[u]$ strictly as required.

 For a fixed $\varphi\in \C^1_0(\Omega\times\mbR^m)$, we define, for $u\in \BV\left(\Omega;\mbR^m\right),\lambda\in\mbfM(\Omega\times\mbR^m;\mbR^{m\times d})$,
\begin{equation}
Q_\varphi\left(u,\lambda\right):= \int_\Omega\nabla_x\varphi\left(x,u(x)\right)\;\dd x+\int_{\Omega\times\mbR^m}\nabla_y\varphi\left(x,y\right)\;\dd\lambda(x,y).
\end{equation}
Considering $Q_\varphi\left(u_j,\mu[u_j]\right)$, we note that this expression can be rewritten using Vol'pert's averaged superposition and the chain rule in $\BV(\Omega;\mbR^m)$ (Definition~\ref{volpert} and Theorem~\ref{chainrule} in Section~\ref{prelims}) :
\begin{align*}
Q_\varphi\left(u_j,\mu[u_j]\right) & =\int_\Omega\nabla_x\varphi\left(x,u_j(x)\right)\;\dd x+\int_{\Omega}\int_0^1\nabla_y\varphi\left(x,u_j^\theta(x)\right) \; \dd \theta \; \dd Du_j(x) \\ &
=\int_\Omega\overline{\varphi}_{w_j}(x) \; \dd Dw_j(x) \\ &
=\int_\Omega D\left(\varphi\circ w_j\right),
\end{align*}
where $w_j(x)=\left(x,u_j(x)\right)$. Since $\varphi\circ w_j$ is compactly supported, we can (by mollification) approximate it with a sequence of compactly supported smooth functions whose derivatives converge strictly to $D(\varphi\circ w_j)$. The Divergence Theorem holds for each term in this sequence and so, making use of Reshetnyak's Continuity Theorem to deduce that the integrals converge, we see that 
\begin{equation}\label{equ_jmu_jchainrule}
Q_\varphi\left(u_j,\mu[u_j]\right)=0
\end{equation}
and that, analogously,
\[
Q_\varphi\left(u,\mu[u]\right)=0.
\]
Taking the limit as $j\to\infty$ in \eqref{equ_jmu_jchainrule} we deduce that $Q_\varphi\left(u,\eta\right)=0$ by weak* convergence of $\mu[u_j]$ to $\eta$ and the fact that $u_j\to u$ in $\Lp^1$ implies that $\nabla_x\varphi\left(\frarg,u_j(\frarg)\right)\to\nabla_x\varphi\left(\frarg,u(\frarg)\right)$ in $\Lp^1$. Hence, we have that $Q_\varphi(u,\mu[u])=0=Q_\varphi(u,\eta)$, which therefore implies
\begin{equation}\label{eqetamuidentity}
 \int_{\Omega\times\mbR^m}\nabla_y\varphi(x,y)\;\dd\mu[u](x,y)= \int_{\Omega\times\mbR^m}\nabla_y\varphi(x,y)\;\dd\eta(x,y).
\end{equation}
Now, let $f\in \C_c^\infty\left(\Omega\right)$ be arbitrary, $h_R\in \C_c^\infty\left(\mbR^m\right)$ be a smooth function satisfying $h_R(y)=1$ for $|y|\leq R$, $h_R(y)=0$ for $|y|\geq 2R$, and let $z\in\mathbb{B}^m$ also be arbitrary. Setting $\varphi(x,y)=f(x)h_R(y)\ip{y}{z}$ in Equation~\eqref{eqetamuidentity}, letting $R\to\infty$ and applying the Dominated Convergence Theorem, we see that
\begin{equation*}
 \ip{\int_{\Omega\times\mbR^m}f(x)\;\dd\mu[u](x,y)}{z}= \ip{\int_{\Omega\times\mbR^m}f(x)\;\dd\eta(x,y)}{z}.
\end{equation*}
By the arbitrariness of $z$, this then clearly implies
\begin{equation*}
 \int_{\Omega\times\mbR^m}f(x)\;\dd\mu[u](x,y)= \int_{\Omega\times\mbR^m}f(x)\;\dd\eta(x,y),
\end{equation*}
and hence that $\pi_\sharp \eta=Du$. Next, for $A\in\mathcal{B}(\Omega)$ (recall that $\mathcal{B}(\Omega)$ denotes the Borel sets in $\Omega$),
\begin{align*}
\pi_\sharp|\eta|\left(A\right)=|\eta|\left(A\times\mbR^m\right) & =\sup\left\{\sum_{h=0}^\infty |\eta\left(B_h\right)|:\left(B_h\right)\subset\mathcal{B}\left(\Omega\times\mbR^m\right)\text{ is a partition of }A\times\mbR^m.\right\} \\ &
\geq\sup\left\{\sum_{h=0}^\infty|\eta\left(A_h\times\mbR^m\right)|:\left(A_h\right)\subset\mathcal{B}\left(\Omega\right)\text{ is a partition of }A.\right\} \\ &
=\sup\left\{\sum_{h=0}^\infty|Du(A_h)|:\left(A_h\right)\subset\mathcal{B}\left(\Omega\right)\text{ is a partition of }A.\right\}=|Du|\left(A\right),
\end{align*}
where we used the fact that $\pi_\sharp\eta=Du$ to move from the second line to the third. This implies that $\pi_\sharp|\eta|\geq|Du|$. By the strict convergence of $(Du_j)$ and the lower semicontinuity of the total variation, we also have $\pi_\sharp|\eta|\left(\Omega\right)=|\eta|\left(\Omega\times\mbR^m\right)\leq\liminf_j|\mu[u_j]|(\Omega\times\mbR^m)=\liminf_j|Du_j|(\Omega)=|Du|\left(\Omega\right)$. Together, these two inequalities imply that $\pi_\sharp|\eta|=|Du|$.

By the Disintegration of Measures Theorem, we can therefore write $\eta=\pi_\sharp|\eta|\otimes\rho=|Du|\otimes\rho$, where $\rho\colon\Omega\to\mbfM(\mbR^m;\mbR^{m\times d})$ is a measure-valued map such that $|Du|$-almost every $|\rho_x|$ is a probability measure. Let $\varphi=f\cdot g$ where $f\in \C_0^1(\Omega)$ and $g\in \C_0^1(\mbR^m)$ are arbitrary. Varying $f$ through a countable dense subset of $\C_0^1(\Omega)$ in Equation~\eqref{eqetamuidentity}, we deduce that, for every $g\in \C_0^1(\mbR^m)$,
\begin{equation}\label{eqidentifydivs}
\int_{\mbR^m}\nabla g(y)\;\dd\rho_x(y)=\int_{\mbR^m}\nabla g(y)\;\dd\nu_x(y)=\int_0^1\nabla g(u^\theta(x))\;\dd\theta\frac{\dd Du}{\dd|Du|}(x)\quad\text{ for }|Du|\text{-a.e. }x\in\Omega,
\end{equation}
where $\nu_x$ is defined $|Du|$-a.e as in Definition~\ref{lifting}. It remains to show that $\rho_x=\eta_x$ $|Du|$-a.e. Since $|Du|$-almost every $x\in\Omega$ is either a point of approximate continuity for $u$ or a jump point of $u$, we consider these two cases separately.\\
\textbullet\, Case 1. $x\in\Omega\setminus\mathcal{J}_u$:

In this situation we can assume that $u$ is approximately continuous at $x$ and that (since it is defined $|Du|$-almost everywhere) the polar function $\frac{\dd Du}{\dd|Du|}(x)$ exists. Hence, $u^\theta(x)=u(x)$, and Equation~\eqref{eqidentifydivs} simplifies to the statement
\[
\int_{\mbR^m}\nabla g(y)\;\dd \rho_x(y)=\nabla g(u(x))\frac{\dd Du}{\dd|Du|}(x).
\]
Let $g\in C_0^1(\mbR^m)$ be such that $\norm{\nabla g}_\infty\leq 1$ and $|\nabla g(u(x))\frac{\dd Du}{\dd|Du|}(x)|=1$. Defining $g_\lambda(y):=\frac{1}{\lambda}g(u(x)+\lambda(y-u(x)))$ and noting that $\nabla g_\lambda(u(x))=\nabla g(u(x))$, $\norm{\nabla g_\lambda}_\infty=\norm{\nabla g}_\infty$, we can use Equation~\eqref{eqidentifydivs} to obtain
\[
\int_{\supp g_\lambda}\nabla g_\lambda(y)\;\dd\rho_x(y)=\int_{\mbR^m}\nabla g(y)\;\dd\rho_x(y)
\]
for every $\lambda>0$. Taking $\lambda\to\infty$ (so that $\mathbbm{1}_{\supp g_\lambda}\nabla g_\lambda\to\mathbbm{1}_{\{u(x)\}}\nabla g(u(x))$ pointwise) and using the Dominated Convergence Theorem, we can deduce
\[
\nabla g(u(x)) \rho_x\left(\{u(x)\}\right)=\nabla g(u(x))\frac{\dd Du}{\dd|Du|}(x).
\]
By our choice of $g$, this implies that $|\rho_x(\{u(x)\})|=1$ and hence, since $|\rho_x|$ is a probability measure, that $|\rho_x|=\delta_{u(x)}$. Equation~\eqref{eqidentifydivs} then easily implies that $\frac{\dd\rho_x}{\dd|\rho_x|}(u(x))=\frac{\dd Du}{\dd|Du|}(x)$, which concludes the proof in this case.\\
\textbullet\,Case 2. $x\in\mathcal{J}_u$: 

We can assume that $\frac{\dd Du}{\dd|Du|}=\frac{u^+(x)-u^-(x)}{|u^+(x)-u^-(x)|}\otimes\tau_u(x)$, where $\tau_u(x)\in\mbR^d$ is a normal vector to $\mathcal{J}_u$ at $x$. Using Equation~\eqref{eqchainruleidentity}, Equation~\eqref{eqidentifydivs} can then be written as
\begin{align*}
\int_{\mbR^m}\nabla g(y)\;\dd\rho_x(y)&=\int_0^1\nabla g(u^\theta(x))\;\dd\theta\,\frac{\dd Du}{\dd|Du|}(x)\\&=
\int_0^1\nabla g(u^\theta(x))\;\dd\theta\,\frac{(u^+(x)-u^-(x))}{|u^+(x)-u^-(x)|}\otimes\tau_u(x)\\&=
\frac{1}{|u^+(x)-u^-(x)|}\left(\int_0^1\nabla g(u^\theta(x))(u^+(x)-u^-(x))\;\dd\theta\right)\tau_u(x)
\\&=\frac{1}{|u^+(x)-u^-(x)|}\left(\int_0^1\frac{\dd}{\dd\theta}g(u^\theta(x))\;\dd\theta\right)\tau_u(x)=\frac{g(u^+(x))-g(u^-(x))}{|u^+(x)-u^-(x)|}\tau_u(x).
\end{align*}
Lemmas~\ref{lemdivmassonline} (applied to $\rho_x$) and~\ref{lemdivonlineidentify} (applied to $\rho_x-\nu_x$) below will demonstrate that, combined with the fact that $|\rho_x|\in\mbfM^1(\mbR^m)$, the above identity suffices to show that $\rho_x=\nu_x$ in $\mbfM(\mbR^m;\mbR^{m\times d})$ as required and hence that $\eta=\mu[u]$. 
\end{proof}

\begin{lemma}\label{lemdivmassonline}
Let $a,b\in\mbR^m$ with $a\neq b$, $c\in\mathbb{S}^{d-1}$ and let $\rho\in\mbfM(\mbR^m;\mbR^{m\times d})$ be such that for every $\varphi\in \C_0^1(\mbR^m)$,
\[
\int_{\mbR^m}\nabla g(y)\;\dd\rho(y)=\frac{g(b)-g(a)}{|b-a|}c,
\]
(stated equivalently, $-\nabla\cdot\rho=\frac{\delta_b-\delta_a}{|b-a|}c$ in $\mbfM(\mbR^m;\mbR^d)$). Then it must hold that
\[
|\rho|([a,b])=|\rho|(\mbR^m),
\]
where $[a,b]$ denotes the (closed) straight line segment between $a$ and $b$.
\end{lemma}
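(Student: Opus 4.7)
Setting $e := (b-a)/|b-a|$, the plan rests on leveraging the context of Lemma~\ref{mugoodies}, in which $|\rho|$ is a probability measure; this mass normalisation provides the essential rigidity, morally expressing that $\rho$ is a mass-minimising $1$-current with prescribed boundary and hence concentrated on the straight-line segment $[a,b]$.

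First I would pin down the polar $\sigma := \dd\rho/\dd|\rho|$. Testing the divergence identity against smooth compactly supported approximations of $g(y) := \langle y-a,e\rangle$ (valid because $|\rho|$ has finite total variation) gives the integrated relation $e^T\rho(\mathbb{R}^m) = c^T$. Together with the pointwise bounds $|e^T\sigma|\le|\sigma|_{op}\le|\sigma|_F = 1$ ($|\rho|$-a.e.) and $|\rho|(\mathbb{R}^m) = 1$, the chain
\[
1 = |c| = \Bigl|\int e^T\sigma\,\dd|\rho|\Bigr|\le \int|e^T\sigma|\,\dd|\rho| \le \int|\sigma|_{op}\,\dd|\rho| \le \int|\sigma|_F\,\dd|\rho| = 1
\]
is saturated at every step. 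The three saturations force, respectively, $e^T\sigma$ to be $|\rho|$-a.e.\ parallel to $c^T$, $e$ to be a maximising left singular vector of $\sigma$, and $\sigma$ to have rank one. Combining these facts with the unit Frobenius norm then identifies $\sigma = e\otimes c$ $|\rho|$-a.e.

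With $\rho = (e\otimes c)|\rho|$, the divergence identity reduces to the scalar equation
\[
\int_{\mathbb{R}^m}(\nabla g\cdot e)\,\dd|\rho| = \frac{g(b)-g(a)}{|b-a|}\qquad\forall g\in C_0^1(\mathbb{R}^m).
\]
Taking $g(y) := h(\langle y-a,e\rangle)$ for arbitrary $h\in C_c^1(\mathbb{R})$, one sees that the pushforward $(\pi_e)_\sharp|\rho|$ under $\pi_e(y) := \langle y-a,e\rangle$ satisfies a one-dimensional distributional ODE whose unique probability solution is $\mathcal{L}^1\restrict[0,|b-a|]/|b-a|$; in particular $|\rho|$ is concentrated in the slab $\{y:\langle y-a,e\rangle\in[0,|b-a|]\}$.

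Finally, for the transverse concentration I would test with products $g(y) := h(\langle y-a,e\rangle)\,k(|y - a - \langle y-a,e\rangle e|^2)$, where $h$ smoothly approximates $t\mapsto t\chi_{[0,|b-a|]}(t)$ and $k\in C_c^1(\mathbb{R})$ satisfies $k(0)=1$ with $\mathrm{supp}\,k\subset[0,\varepsilon^2]$. Passing to the limit in $h$ reduces the identity to $|\rho|(T_\varepsilon) = 1$, where $T_\varepsilon$ is the $\varepsilon$-neighbourhood of $[a,b]$; letting $\varepsilon\to 0^+$ yields the desired $|\rho|([a,b]) = |\rho|(\mathbb{R}^m)$. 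The principal obstacle is the polar rigidity step, which requires simultaneous equality in three matrix-vector inequalities to pin down $\sigma$ completely; the remaining arguments are standard one-dimensional distributional analysis and a mollification-based tube argument.
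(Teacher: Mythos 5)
Your proof is correct in substance but takes a genuinely different route from the paper's. The paper argues directly: for each $y_0\notin[a,b]$ it constructs a $1$-Lipschitz ``broken distance function'' through the foot point of $y_0$ on $[a,b]$ that realises the extremal increment $g(b)-g(a)=|b-a|$ while vanishing near $y_0$, and pairs it against the hypothesis to force $|\rho|(\mbR^m\setminus B_{\delta/2}(y_0))\geq 1$; a countable cover of $\mbR^m\setminus[a,b]$ finishes the job. You instead first identify the polar, then the axial pushforward, then localise transversally. Your route is longer but yields strictly more: it determines $\rho$ completely as $(e\otimes c)\,\mathcal{H}^1\restrict[a,b]/|b-a|$, which would make Lemma~\ref{lemdivonlineidentify} redundant in the application. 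Note that the step you flag as the principal obstacle is actually the easiest: once the chain gives $|e^{T}\sigma|=1=|\sigma|_F$ $|\rho|$-a.e.\ and $e^T\sigma=c^T$ a.e.\ (saturation of the triangle inequality for the vector-valued integral), the identity $|\sigma|_F^2=|e^T\sigma|^2+|(I-e\otimes e)\sigma|_F^2$ immediately forces $\sigma=e\otimes(e^T\sigma)=e\otimes c$, with no need to discuss singular vectors or rank. Both proofs --- yours explicitly, the paper's tacitly in the line ``$|\rho|(C)=|\rho|(\mbR^m)-|\rho|(\mbR^m\setminus C)\leq 1-1$'' --- use the normalisation $|\rho|(\mbR^m)=1$, which is absent from the statement of the lemma but available in the application; you were right to insist on it, since without it the conclusion is false (add to $\rho$ any divergence-free ``loop current'' supported away from $[a,b]$). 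Two small repairs: the test function $h(\ip{y-a}{e})$ in the pushforward step is supported on a slab, not compactly, so it needs the same cutoff you used for $\ip{y-a}{e}$; and in the tube step you should approximate $t\mapsto\min(\max(t,0),|b-a|)$ (suitably truncated) rather than $t\mapsto t\chi_{[0,|b-a|]}(t)$, whose jump at $t=|b-a|$ would otherwise contribute an endpoint term --- harmless here only because you have already shown that $(\pi_e)_\sharp|\rho|$ has no atoms.
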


\begin{proof}
First, we define the vector-valued measure $\ip{\rho}{c}\in\mbfM(\mbR^m;\mbR^m)$ by 
\[
\int_{\mbR^m}h(y)\cdot\dd\ip{\rho}{c}(y):=\int_{\mbR^m}h(y)\otimes c: \dd\rho(y)\qquad\text{for } h\in\C(\mbR^m;\mbR^m),
\]
where $A:B:=\sum_{i,j}A^i_jB^i_j$ denotes the Frobenius product of two $m\times n$ matrices. If $C\subset\mbR^m$ is a Borel set, then, for any $g\in \C_0^1(\mbR^m)$ satisfying $\norm{\nabla g}_\infty\leq 1$ and $\nabla g\restrict C=0$, we have that
\begin{align*}
|\rho|(\mbR^m\setminus C)\geq\left|\ip{\rho}{c}\right| (\mbR^m\setminus C)&\geq\int_{\mbR^m}(\nabla g(y))^\top\cdot\dd\ip{\rho}{c}(y)\\
&=\int_{\mbR^m}\left((\nabla g(y))^\top\otimes c\right):\dd\rho(y)\\
&=\int_{\mbR^m}\nabla g(y)\;\dd\rho(y)\cdot c\\
&=\frac{g(b)-(a)}{|b-a|}c\cdot c=\frac{g(b)-(a)}{|b-a|}.
\end{align*}
Hence, if such a $g$ can be found which also satisfies $g(b)-g(a)=|b-a|$, it must follow that
\[
|\rho|(C)=|\rho|(\mbR^m)-|\rho|(\mbR^m\setminus C)\leq 1-1=0,
\]
i.e. $|\rho|(C)=0$. We will use an approximate version of this strategy to show that, for every $y_0\in\mbR^m\setminus[a,b]$, there exists a $\delta>0$ such that $|\rho|(B_\delta(y_0))=0$. By considering the union of $B_{\delta_i}(y_i)$ across a countable dense set $\{y_i\}\subset\mbR^m\setminus[a,b]$, we will then obtain that $|\rho|(\mbR^m\setminus[a,b])=0$.

Fix $y_0\in\mbR^m\setminus[a,b]$, let $p$ be the closest point to $y_0$ on $[a,b]$ and define
\[
g_{y_0}(y):=\begin{cases}
|y-a|-|p-a| &\text{ if }|y-a|\leq|p-a|,\\
|b-p|-|y-b| & \text{ if }|y-b|\leq|b-p|,\\
0 & \text{ otherwise.}
\end{cases}
\]
The function $g_{y_0}$ is continuous and compactly supported, is piecewise differentiable on $\mbR^m$ with $\norm{\nabla g_{y_0}}_\infty\leq 1$ and (since $p\in[a,b]$) satisfies $g_{y_0}(b)-g_{y_0}(a)=|b-p|+|p-a|=|b-a|$. By our choice of $p$, $|y_0-p|\leq|y_0-a|$ and $|y_0-p|\leq|y_0-b|$, whence it follows that $g_{y_0}(y_0)=0$. If $p\in(a,b)$, then (since the closest point to $y_0$ on $[a,b]$ must be unique) these inequalities must be strict and we can deduce that there exists a $\delta>0$ such that $g_{y_0}\restrict B_\delta(y_0)\equiv 0$. If $p\in\{a,b\}$, a similar line of reasoning applies and the same conclusion follows.

Now let $\kappa$ be a smooth, positive mollifier and consider the mollifications $g_{y_0,\varepsilon}:=g_{y_0}*\kappa_\varepsilon$. These functions are smooth, have support contained within $\supp g_{y_0}+B_\varepsilon(0)$, and, since mollifications of continuous functions converge pointwise, it holds that $g_{y_0,\varepsilon}(b)-g_{y_0,\varepsilon}(a)\to|b-a|$ as $\varepsilon\to 0$. In addition, since $\nabla g_{y_0,\varepsilon}=(\nabla g_{y_0})*\kappa_\varepsilon$, it also holds that $\norm{\nabla g_{y_0,\varepsilon}}_\infty\leq\norm{\nabla g_{y_0}}_\infty\int_{\mbR^m}\kappa_\varepsilon(y)\;\dd y=1$. For $\varepsilon<\delta/2$ we have that $g_{y_0,\varepsilon}\restrict B_{\delta/2}(y_0)\equiv 0$, and hence that
\[
|\rho|(\mbR^m\setminus B_{\delta/2}(y_0))\geq\lim_{\varepsilon\to 0}\int_{\mbR^m}\nabla g_{y_0,\varepsilon}(y)\;\dd\rho(y)\cdot c=\lim_{\varepsilon\to 0}\frac{g_{y_0,\varepsilon}(b)-g_{y_0,\varepsilon}(a)}{|b-a|}=1,
\]
as required.
\end{proof}

The following lemma is a special case of Theorem~D.1 from~\cite{BrCoLi86HMD}.
\begin{lemma}\label{lemdivonlineidentify}
Let $a,b\in\mbR^m$ be such that $a\neq b$ and let $\rho\in\mbfM(\mbR^m;\mbR^{m\times d})$ be such that $|\rho|(\mbR^m\setminus[a,b])=0$ and that, for every $g\in \C_0^1(\mbR^m)$,
\[
\int_{\mbR^m}\nabla g(y)\;\dd\rho(y)=0.
\]
Then it must hold that
\[
\rho=0\quad\text{ in }\mbfM(\mbR^d;\mbR^{m\times d}).
\]
\end{lemma}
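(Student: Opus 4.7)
My plan is to reduce to a scalar-valued statement and then exploit separable test functions of product form $g(y)=f(s)\psi(y')$ that isolate the tangential and normal behaviour of $\rho$ along $[a,b]$. Write $\rho=(\rho^1,\ldots,\rho^d)$ column-by-column, so that each $\rho^j\in\mbfM(\mbR^m;\mbR^m)$ inherits the support condition $\supp\rho^j\subseteq[a,b]$, and the hypothesis decouples into $\int_{\mbR^m}\nabla g(y)\cdot\dd\rho^j(y)=0$ for every $g\in\C_0^1(\mbR^m)$ and each $j$. It therefore suffices to show that if $\lambda\in\mbfM(\mbR^m;\mbR^m)$ has $\supp\lambda\subseteq[a,b]$ and $\int\nabla g\cdot\dd\lambda=0$ for all $g\in\C_0^1(\mbR^m)$, then $\lambda=0$.

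Parametrise via $y=a+se+y'$ with $e:=(b-a)/|b-a|$, $L:=|b-a|$, $s\in\mbR$, and $y'\in e^\perp$, and let $\tilde\lambda\in\mbfM([0,L];\mbR^m)$ denote the pushforward of $\lambda$ under the map $y\mapsto(y-a)\cdot e$; since $\supp\lambda\subseteq[a,b]$, this records $\lambda$ completely. The first step is to show that $\tilde\lambda$ is tangential, i.e.\ $\tilde\lambda=\tilde\mu\,e$ for some scalar $\tilde\mu\in\mbfM([0,L])$. To do so, test against $g(y)=f(s)\psi(y')$ with $f\in\C_0^1(\mbR)$ and $\psi\in\C_0^1(e^\perp)$ satisfying $\psi(0)=0$ and $\nabla\psi(0)=w$ for an arbitrary $w\in e^\perp$; on $[a,b]$ (where $y'=0$) one computes $\nabla g=f(s)w$, and the hypothesis yields
\[
 w\cdot\int_0^L f(s)\,\dd\tilde\lambda(s)=0\qquad\text{for all }f\in\C_0^1(\mbR)\text{ and }w\in e^\perp.
\]
Decomposing $\tilde\lambda$ against an orthonormal basis $\{e,e_2,\ldots,e_m\}$ of $\mbR^m$ and using density of $\C_0^1(\mbR)$ in $\C_0(\mbR)$ forces every component of $\tilde\lambda$ along $e_2,\ldots,e_m$ to vanish.

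The second step eliminates $\tilde\mu$. Choose $\psi\in\C_0^1(e^\perp)$ with $\psi(0)=1$ and $\nabla\psi(0)=0$ (a radial bump suffices), so that $g(y)=f(s)\psi(y')$ satisfies $\nabla g=f'(s)e$ on $[a,b]$. The hypothesis then reduces to $\int_0^L f'(s)\,\dd\tilde\mu(s)=0$ for every $f\in\C_0^1(\mbR)$, i.e.\ the distribution on $\mbR$ obtained by extending $\tilde\mu$ by zero has vanishing distributional derivative. Any such distribution is constant, and a finite measure supported on $[0,L]$ that is constant on $\mbR$ must be the zero measure, whence $\tilde\mu=0$ and $\lambda=0$. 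The principal technical care is in verifying that these product test functions really lie in $\C_0^1(\mbR^m)$ (immediate from compact support of $f$ and $\psi$) and in computing $\nabla g$ at $y'=0$ correctly: because $\psi$ depends only on the normal coordinate, its gradient at the origin automatically lies in $e^\perp$, which is exactly what forces the tangential/normal separation in Step~1. I expect no serious obstacle beyond these bookkeeping issues; the conceptual point is that the one-dimensional support of $\lambda$ leaves only the tangential direction for mass to persist, which the distributional derivative condition then eliminates.
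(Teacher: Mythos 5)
Your argument is correct, but it proceeds quite differently from the paper's proof, so a comparison is worthwhile. The paper fixes an arbitrary $g\in\C_0^1(\mbR^m)$ and localises along the segment: it multiplies $g$ by a cutoff $\eta$ that transitions from $1$ to $0$ over the subsegment $[z,z+\varepsilon(b-a)]$, applies the hypothesis to $\eta g$, and controls the two error terms using $\norm{\nabla\eta}_\infty\leq 2/(\varepsilon|b-a|)$ together with $|\rho|([z,z+\varepsilon(b-a)])\to 0$ for every $z$ that is not an atom of $\rho$; since there are only countably many atoms, this yields $\int_a^z\nabla g\,\dd\rho=0$ for all but countably many $z$, and specialising to $g$ affine near $[a,b]$ kills $\rho$ on almost every subsegment, hence everywhere. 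Your route instead decomposes $\rho$ into columns, pushes each column forward to a one-dimensional measure $\tilde\lambda$ on $[0,L]$, and uses separated test functions $g(y)=f(s)\psi(y')$: the choice $\psi(0)=0$, $\nabla\psi(0)=w$ annihilates the components of $\tilde\lambda$ orthogonal to $e$, while $\psi(0)=1$, $\nabla\psi(0)=0$ reduces the tangential component to the statement that a compactly supported measure on $\mbR$ with vanishing distributional derivative is zero. Your version avoids the atom-avoidance and mollification bookkeeping entirely and makes the mechanism transparent (only tangential mass could survive, and the fundamental theorem of calculus rules it out), at the modest cost of the column reduction and the pushforward formalism; the paper's version works directly with the matrix-valued measure and an arbitrary $g$. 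The only points worth making explicit in a written-up version are the degenerate case $m=1$ (where $e^\perp=\{0\}$ and Step~1 is vacuous) and the observation that the product test functions need only be evaluated on $[a,b]$, where $y'=0$, since $|\lambda|$ vanishes off the segment; both are exactly the bookkeeping issues you already flag.
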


\begin{proof}
Let $g\in\C^1_0(\mbR^m)$ be arbitrary, $z\in[a,b]$ be such that $|\rho|(\{z\})=0$ and, for a given $\varepsilon>0$, let $\eta\in\C^1_0(\mbR^m)$ be such that $0\leq\eta\leq 1$, $\eta\equiv 1$ on $[a,z]$, $\eta\equiv 0$ on $[z+\varepsilon(b-a),b]$ and $\norm{\nabla\eta}_\infty\leq 2/(\varepsilon|b-a|)$. We then have that
\begin{align*}
0&=\int_{\mbR^m}\nabla(\eta g)(y)\;\dd\rho(y)=\int_a^b\eta(y)\nabla g(y)\;\dd\rho(y)+\int_a^b g(y)\nabla\eta(y)\;\dd\rho(y)\\&=
\int_a^z\nabla g(y)\;\dd\rho(y)+\int_z^{z+\varepsilon(b-a)}\eta(y)\nabla g(y)\;\dd\rho(y)+\int_{z}^{z+\varepsilon(b-a)}g(y)\nabla\eta(y)\;\dd\rho(y).
\end{align*}
We claim that the final two integrals tend to $0$ as $\varepsilon\to 0$. For the middle integral, this is immediate since the assumption that $z$ is not an atom of $\rho$ implies that
\[
\left|\int_z^{z+\varepsilon(b-a)}\eta(y)\nabla g(y)\;\dd\rho(y)\right|\leq\norm{\nabla g}_\infty|\rho|([z,z+\varepsilon(b-a)])\to 0
\]
as $\varepsilon\to 0$. For the second integral, we can use the fact that $g$ is Lipschitz to observe
\begin{align*}
&\left|\int_{z}^{z+\varepsilon(b-a)}g(y)\nabla\eta(y)\;\dd\rho(y)\right|\\&
\qquad\leq\int_z^{z+\varepsilon(b-a)}\left|g(y)-g(z)\right|\cdot\norm{\nabla\eta}_\infty\;\dd|\rho|(y)
+\left|g(z)\int_z^{z+\varepsilon(b-a)}\nabla\eta(y)\;\dd\rho(y)\right|\\&
\qquad\leq\norm{\nabla g}_\infty|b-a|\varepsilon\left(\frac{2}{\varepsilon|b-a|}\right)|\rho|([z,z+\varepsilon(b-a)])
+\norm{g}_\infty\left|\int_{\mbR^m}\nabla\eta(y)\;\dd\rho(y)\right|\\&
\qquad=2\norm{\nabla g}_\infty|\rho|\left([z,z+\varepsilon(b-a)]\right)\to 0.
\end{align*}
Since $\rho$ can have at most countably many atoms, we have therefore obtained that $\int_a^z\nabla g(y)\;\dd \rho(y)=0$ for all but countably many $z\in[a,b]$. Taking $g$ to be affine in a neighbourhood of $[a,b]$, we see that this implies $\int_{z_1}^{z_2}w\;\dd\rho(y)=0$ for all but countably many $z_1,z_2\in[a,b]$ for every $w\in\pd\mathbb{B}^m$, from which it follows that $\rho=0$ as required.
\end{proof}

\begin{remark}
The authors of~\cite{JunJer04SCML} define the \textbf{minimal lifting} of $u$ to be the measure $\mu[u]$ satisfying the equation $Q_\varphi(u,\mu[u])=0$ with the additional property that $\pi_\sharp|\mu[u]|(\Omega)=|Du|(\Omega)$. This is equivalent to our definition of a lifting.
\end{remark}

The following corollary is now a direct consequence of Reshetnyak's Continuity Theorem and Lemma~\ref{mugoodies}.

\begin{corollary}\label{1homogenouscontinuity}
Let $f\in \C(\Omega\times\mbR^m\times\mbR^{m\times d})$ be positively $1$-homogeneous in the final variable and satisfy $|f(x,y,A)|\leq C|A|$. Then the functional 
\begin{align*}
\mathcal{F}[u]&=\int_{\Omega\times\mbR^m}f\left(x,y,\frac{\dd\mu[u]}{\dd |\mu[u]|}(x,y)\right)\dd|\mu[u]|(x,y)\\
&=\int_\Omega f(x,u(x),\nabla u(x))\;\dd x+ \int_\Omega\int_0^1 f^\infty\left(x,u^\theta (x),\frac{\dd D^s u}{\dd|D^s u|}(x)\right)\;\dd\theta\;\dd|D^su|(x)
\end{align*}
is strictly continuous on $\BV(\Omega;\mbR^m)$.
\end{corollary}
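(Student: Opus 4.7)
The plan is to combine Reshetnyak's Continuity Theorem (Theorem~\ref{Reshetnyak}) with the strict continuity of liftings (Lemma~\ref{mugoodies}). The first observation is that the two expressions given for $\mathcal{F}[u]$ in the statement coincide: this is exactly the computation carried out in the excerpt immediately after Definition~\ref{lifting}, which uses that $\frac{\dd \mu[u]}{\dd|\mu[u]|}=\frac{\dd Du}{\dd|Du|}$ and splits $Du = \nabla u\,\mathcal{L}^d + D^s u$; since $f$ is itself positively $1$-homogeneous in $A$, one has $f^\infty = f$, so the two formulas agree and both are equal to $\int_{\Omega\times\mbR^m} f\bigl(x,y,\tfrac{\dd\mu[u]}{\dd|\mu[u]|}\bigr)\,\dd|\mu[u]|$.

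Next, I would verify that Reshetnyak's Continuity Theorem applies to the integrand $f$ in this setting. The theorem requires a continuous bounded function on $\Omega \times \mbR^m \times \partial \mathbb{B}^{m\times d}$, and only the restriction of $f$ to this set enters the integral because $\frac{\dd \mu[u]}{\dd|\mu[u]|}(x,y) \in \partial\mathbb{B}^{m\times d}$ for $|\mu[u]|$-a.e.\ $(x,y)$. Continuity of the restriction is immediate from the hypothesis $f \in \C(\Omega \times \mbR^m \times \mbR^{m\times d})$. Boundedness, which is not automatic since $\Omega \times \mbR^m$ is not compact, follows from the linear growth assumption: $|f(x,y,\theta)| \leq C|\theta| = C$ for every $\theta \in \partial \mathbb{B}^{m\times d}$ and every $(x,y) \in \Omega \times \mbR^m$.

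To conclude, given a strictly convergent sequence $u_j \to u$ in $\BV(\Omega;\mbR^m)$, Lemma~\ref{mugoodies} yields $\mu[u_j] \to \mu[u]$ strictly in $\mbfM(\Omega\times\mbR^m;\mbR^{m\times d})$; applying Reshetnyak's Continuity Theorem to the restricted integrand then gives $\mathcal{F}[u_j] \to \mathcal{F}[u]$. There is no real obstacle here since the substantive work has been performed in Lemma~\ref{mugoodies}; the only subtlety worth highlighting in the write-up is that positive $1$-homogeneity together with the linear growth bound is exactly what is needed to reduce an \emph{a priori} unbounded integrand to a bounded one on the unit sphere, thereby making Reshetnyak's hypotheses available.
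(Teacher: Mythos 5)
Your proposal is correct and follows exactly the paper's argument: the identity between the two formulas is the computation following Definition~\ref{lifting} (with $f^\infty=f$ by positive $1$-homogeneity and continuity), the growth bound $|f(x,y,A)|\leq C|A|$ gives boundedness of $f$ on $\Omega\times\mbR^m\times\pd\mathbb{B}^{m\times d}$, and the conclusion is Lemma~\ref{mugoodies} combined with Reshetnyak's Continuity Theorem. No gaps.
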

\begin{proof}
Simply combine Corollary~\ref{mugoodies} with Reshetnyak's Continuity Theorem~\ref{Reshetnyak}, the discussion following Definition~\ref{lifting}, and the fact that $|f(x,y,A)|\leq C|A|$ implies that the restriction of $f$ to $\Omega\times\mbR^m\times\pd\mathbb{B}^{m\times d}$ is bounded.
\end{proof}
Next, we prove an embedding result for the space $\BV(\Omega;\mbR^m)$ equipped with the metric of strict convergence, which will be of use in Section~\ref{bigproof}.

\begin{proposition}\label{ctsembedding}
Let $\Omega\subset\mbR^d$ be a domain with compact Lipschitz boundary and assume that $d>1$. Then the embedding
\[\BV(\Omega;\mbR^m)\hookrightarrow \Lp^{1^*}(\Omega;\mbR^m)
\]
is continuous when $\BV(\Omega;\mbR^m)$ is equipped with the topology of strict convergence.
\end{proposition}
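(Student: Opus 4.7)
The plan is to exploit the fact that strict convergence forces $|Du_j|$ to converge narrowly to $|Du|$, which combined with the absence of atoms of $|Du|$ when $d>1$ rules out concentration of the $\Lp^{1^*}$-mass of $u_j-u$ at points. A covering of $\overline{\Omega}$ by balls on which $|Du|$ is small, together with the scale-invariant Sobolev--Poincar\'e inequality and the strict positivity of $1^*-1=1/(d-1)$, then delivers the conclusion.

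First I would observe that strict convergence $u_j\to u$ in $\BV(\Omega;\mbR^m)$ implies strict convergence $Du_j\to Du$ in $\mbfM(\Omega;\mbR^{m\times d})$. The positive measures $|Du_j|$ are norm-bounded, and any weak* subsequential limit $\lambda$ satisfies $\lambda\geq|Du|$ by lower semicontinuity of the total variation, while $\lambda(\Omega)\leq\liminf_j|Du_j|(\Omega)=|Du|(\Omega)$ forces $\lambda=|Du|$; hence $|Du_j|\wsc|Du|$, together with $|Du_j|(\Omega)\to|Du|(\Omega)$, i.e.\ narrow convergence. By the Portmanteau theorem, $|Du_j|(E)\to|Du|(E)$ for every Borel $E\subset\Omega$ whose relative boundary in $\Omega$ is $|Du|$-null. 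Because $d>1$, the measure $|Du|$ is absolutely continuous with respect to $\mathcal{H}^{d-1}$, so $|Du|(\{x\})=0$ for every $x\in\Omega$.

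Given $\varepsilon>0$, each $x\in\overline{\Omega}$ therefore admits a radius $r_x>0$ with $|Du|(\overline{B_{r_x}(x)}\cap\Omega)<\varepsilon$ and $|Du|(\partial B_{r_x}(x)\cap\Omega)=0$. By compactness of $\overline{\Omega}$ and a Besicovitch-type selection I extract a finite subcover $\{U_i:=B_{r_i}(x_i)\cap\Omega\}_{i=1}^N$ whose overlap multiplicity is bounded by a dimensional constant $M$. Lipschitz regularity of $\partial\Omega$ makes each $U_i$ a Lipschitz domain of uniformly controlled shape, so the scale-invariant Sobolev--Poincar\'e inequality
\begin{equation*}
\bigl\|u_j-u-(u_j-u)_{U_i}\bigr\|_{\Lp^{1^*}(U_i)}\leq C\,|D(u_j-u)|(U_i)
\end{equation*}
holds with $C$ independent of $i$ and $\varepsilon$, where $(u_j-u)_{U_i}$ denotes the mean value. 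Combining this with the elementary bound $|(u_j-u)_{U_i}|\,|U_i|^{1/1^*}\leq|U_i|^{-1/d}\|u_j-u\|_{\Lp^1(U_i)}$, summing the $1^*$-th powers and applying the bounded-overlap property, I arrive at
\begin{equation*}
\|u_j-u\|_{\Lp^{1^*}(\Omega)}^{1^*}\leq C_1\sum_{i=1}^N|D(u_j-u)|(U_i)^{1^*}+C_2\sum_{i=1}^N|U_i|^{-1^*/d}\|u_j-u\|_{\Lp^1(U_i)}^{1^*}.
\end{equation*}

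The second sum vanishes as $j\to\infty$ for each fixed $\varepsilon$, since $u_j\to u$ in $\Lp^1$ and the family $\{U_i\}$ is fixed. For the first sum, the narrow convergence from the first paragraph yields $|D(u_j-u)|(U_i)\leq|Du_j|(U_i)+|Du|(U_i)<3\varepsilon$ for $j$ sufficiently large, whence $|D(u_j-u)|(U_i)^{1^*}\leq(3\varepsilon)^{1^*-1}|D(u_j-u)|(U_i)$; a further use of bounded overlap together with $|D(u_j-u)|(\Omega)\leq|Du_j|(\Omega)+|Du|(\Omega)\to 2|Du|(\Omega)$ bounds the entire first sum by $CM(3\varepsilon)^{1^*-1}|Du|(\Omega)$. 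Passing to $\limsup_j$ followed by $\varepsilon\to 0$, and using $1^*-1=1/(d-1)>0$, delivers $u_j\to u$ in $\Lp^{1^*}(\Omega;\mbR^m)$. The principal technical difficulty is arranging the bounded-overlap cover so as to simultaneously satisfy the small-mass and $|Du|$-continuity conditions on each sphere while retaining a uniform Sobolev--Poincar\'e constant near $\partial\Omega$; the Lipschitz regularity of $\partial\Omega$ is crucial here, and the strict positivity of $1^*-1$ is precisely what distinguishes this argument from the excluded borderline case $d=1$, where the proposition fails.
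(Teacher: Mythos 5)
Your argument is correct in its essentials, but it takes a genuinely different route from the paper. The paper extends $u_j,u$ by zero to $\BV(\mbR^d;\mbR^m)$ (using strict continuity of the trace), reduces the claim to $1^*$-uniform integrability via Vitali, and then runs Lions' concentration-compactness machinery in full: Brezis--Lieb, defect measures $\gamma$ and $\nu$ with the nonlinear estimate $\gamma(A)\leq C\nu(A)^{1^*}$, Besicovitch differentiation to show $\gamma$ is purely atomic, and finally the contradiction $0<\gamma_0\leq(|Du|(\{x_0\}))^{1^*}$. You instead localise directly: narrow convergence $|Du_j|\to|Du|$ plus non-atomicity of $|Du|$ for $d>1$ gives a finite bounded-overlap cover by sets of small $|Du|$-mass with $|Du|$-null relative boundaries, and the scale-invariant Sobolev--Poincar\'e inequality together with $|D(u_j-u)|(U_i)^{1^*}\leq(3\varepsilon)^{1^*-1}|D(u_j-u)|(U_i)$ closes the estimate. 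Both proofs ultimately rest on the same two facts --- Portmanteau-type convergence of $|Du_j|$ on good sets and $|Du|(\{x\})=0$ --- but yours is more elementary (no Prokhorov, no Brezis--Lieb, no disintegration of a defect measure) at the cost of the one technical point you flag yourself: a Sobolev--Poincar\'e constant for $B_r(x)\cap\Omega$ that is uniform over boundary points and small radii, which also requires these intersections to be connected John domains with uniform constants. That is true for Lipschitz $\partial\Omega$ but is not free; you could sidestep it entirely by borrowing the paper's first move --- extend by zero to $\mbR^d$ via the strictly continuous trace operator --- after which only full balls appear and the Poincar\'e constant is purely dimensional. With that adjustment the proof is complete and, to my eye, shorter than the paper's.
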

This result is of interest since it yields an extension of the continuous embedding $\BV(\Omega;\mbR^m)\hookrightarrow \Lp^p(\Omega;\mbR^m)$ for all $p<1^*$ when $\BV(\Omega;\mbR^m)$ is equipped with the usual weak* topology to the critical case $p=1^*$. Note that Proposition~\ref{ctsembedding} does not hold when $d=1$, as Example~\ref{excriticalcasefails} in Section~\ref{bigproof} demonstrates.
\begin{proof}
Let $u_j\to u$ strictly. Since ($u_j)$ converges to $u$ in measure (as a consequence of strong $\Lp^1$ convergence), this then implies, via Vitali's Convergence Theorem, that $u_j\to u$ in $\Lp^{1^*}$ if and only if $(u_j)$ is $1^*$-uniformly integrable. In this situation, assuming that $(u_j)$ is not $1^*$-uniformly integrable, we can apply Lions' concentration-compactness principle (Lemma I.1 in~\cite{Lion85CCP}) to arrive at a contradiction. For reasons of clarity, however, we will carry out the derivation here in full:

If $(u_j)_{j\in\mathbb{N}}$ is not $1^*$-uniformly integrable, $(u_j-u)_{j\in\mathbb{N}}$ is not either. Extending $u\in \BV(\Omega;\mbR^m)$ by zero to an element of $\BV(\mbR^d;\mbR^m)$, we have
\[
 |Du|=|Du|\restrict\Omega+|u_{\pd\Omega}|\mathcal{H}^{d-1}\restrict\pd\Omega,
\]
where $u_{\pd\Omega}\in \Lp^1(\pd\Omega;\mbR^m)$ is the trace of $u$ on $\pd\Omega$. Since the map $u\mapsto u_{\pd\Omega}\in \Lp^1(\pd\Omega;\mbR^m)$ is strictly continuous (see~\cite{AmFuPa00FBVF}, Theorem 3.88), we have that $(u_j)$ is strictly convergent in $\BV(\Omega;\mbR^m)$ if and only if it is in $\BV(\mbR^d;\mbR^m)$. Without loss of generality, then, we can view $(u_j)$ and $u$ as elements of $\BV(\mbR^d;\mbR^m)$ whose support is contained in the compact set $\overline{\Omega}$. By the Rellich--Kondrachov Theorem, $(u_j)$ is bounded in $\Lp^{1^*}(\Omega;\mbR^m)$ and so, by passing to a subsequence, we can also assume that 
\begin{equation*}
|u_j-u|^{1^*}\wsc\gamma \text{ and } |D(u_j-u)|\wsc\nu \text{ in } \mbfM\left(\overline{\Omega}\right).
\end{equation*}
 Since $(|u_j-u|^{1^*})$ is not uniformly integrable and is supported in $\overline{\Omega}$, we can (via Prokhorov's Theorem) assume that $\gamma>0$.

Recall that the Poincar\'e inequality
\begin{equation}\label{gagliardonirenberg}
 \norm{u}_{1^*}\leq C|Du|\left(\mbR^d\right)
\end{equation}
holds for $u$ in $\BV(\mbR^d;\mbR^m)$ (see Section~5.6.1 in~\cite{EvaGar92MTFP}). For $\varphi\in \C^1(\overline{\Omega})$, we can apply the inequality \eqref{gagliardonirenberg} to $\varphi u_j$ in order to obtain:
\[
 \norm{\varphi u_j}^{1^*}_{1^*}\leq C\left(|D(\varphi u_j)|(\mbR^d)\right)^{1^*}\leq  C\left(\int_{\overline{\Omega}} |\varphi(x)|\;\dd|Du_j|(x)+\int_{\overline{\Omega}} |\nabla \varphi(x)||u_j(x)|\;\dd x\right)^{1^*}.
\]
By passing again to a subsequence, we can assume that $(u_j)$ converges pointwise $\mathcal{L}^d$-almost everywhere. Taking the limit as $j\to\infty$ whilst using the Brezis--Lieb Lemma on the left hand side and strict convergence of $u_j$ on the right hand side (note that Reshetnyak's Continuity Theorem implies that if $Du_j\to Du$ strictly, then $|Du_j|\wsc|Du|$ as well) we obtain
\begin{equation}\label{gamma|Du|inequality}
\begin{aligned}
 \norm{\varphi u}^{1^*}_{1^*} +\int_{\overline{\Omega}}|\varphi(x)|^{1^*}\;\dd\gamma(x) & =\norm{\varphi u}^{1^*}_{1^*}+\lim_{j\to\infty}\norm{\varphi u_j-\varphi u}_{1^*}^{1^*}\\
& =\lim_{j\to\infty}\norm{\varphi u_j}_{1^*}^{1^*}\\
& \leq C\left(\int_{\overline{\Omega}}|\varphi(x)|\;\dd|Du|(x)+\int_{\overline{\Omega}}|\nabla\varphi(x)||u(x)|\;\dd x\right)^{1^*}.
\end{aligned}
\end{equation}
We will show that $\gamma$ consists only of atoms and that $\gamma\ll|Du|$, which leads to a contradiction.

Applying \eqref{gagliardonirenberg} to $\varphi(u-u_j)$, we see
\[
 \norm{\varphi(u-u_j)}_{1^*}^{1^*}\leq C\left(\int_{\overline{\Omega}}|\varphi (x)|\left|D(u-u_j)\right|(x)+\int_{\overline{\Omega}}|\nabla\varphi(x)||(u-u_j)(x)|\;\dd x\right)^{1^*}.
\]
Letting $j\to\infty$ and using $u_j\to u$ in $\Lp^1(\Omega;\mbR^m)$, then using $\varphi$ to approximate the indicator function of a generic Borel set $A$ gives
\begin{equation}\label{gammanuinequality}
 \gamma(A)\leq C\left(\nu(A)\right)^{1^*}.
\end{equation}
It follows that $\gamma\ll\nu$. For an arbitrary $x\in\overline{\Omega}$, Equation \eqref{gammanuinequality} implies the key nonlinear estimate
\[
 \frac{\gamma\left(B_r(x)\right)}{\nu\left(B_r(x)\right)}\leq C\left(\nu(B_r(x)\right)^{\frac{1}{d-1}}\qquad\text{whenever }\nu\left(B_r(x)\right)>0
\]
and so, taking $r\downarrow 0$ and using the Besicovitch Derivation Theorem, we have that
\[
 \frac{\dd\gamma}{\dd\nu}(x)=0
\]
unless $x$ is an atom of $\nu$. Since $\gamma$ is absolutely continuous with respect to $\nu$ and is finite, we can therefore deduce that $\gamma=\sum_{i\in I}\gamma_i\delta_{x_i}$ for some countable set $I$, some summable sequence $(\gamma_i)\subset\mbR^+$ and some sequence $(x_i)\subset\overline{\Omega}$ of distinct points. Since $\gamma>0$, at least one of the $\gamma_i$ must be nonzero, say $\gamma_0$ at the point $x_0$. Let $\varphi\in \C^\infty_c(B(0,1))$ be such that $0\leq \varphi\leq 1$ and $\varphi(0)=1$. Using $\varphi_\varepsilon:=\varphi(\frac{x-x_0}{\varepsilon})$ in \eqref{gamma|Du|inequality}, we deduce
 
\begin{align*}
&\int_{B(x_0,\varepsilon)}\left(|\varphi_\varepsilon(x)|| u(x)|\right)^{1^*}\;\dd x +\int_{B(x_0,\varepsilon)}|\varphi_\varepsilon(x)|^{1^*}\;\dd\gamma(x) \\ 
&\qquad \leq C\left(\int_{B(x_0,\varepsilon)}|\varphi_\varepsilon(x)|\;\dd|Du|(x) +\int_{B(x_0,\varepsilon)}|\nabla\varphi_\varepsilon(x)||u(x)|\;\dd x\right)^{1^*} 
\\
&\qquad \leq C\left(\int_{B(x_0,\varepsilon)}|\varphi_\varepsilon(x)|\;\dd|Du|(x) + \left(\int_{B(x_0,\varepsilon)}|\nabla\varphi_\varepsilon(x)|^d\;\dd x\right)^{{1}/{d}}\left(\int_{B(x_0,\varepsilon)}|u(x)|^{1^*}\;\dd x\right)^{{1}/{1^*}}\right)^{1^*}
\\
& \qquad= C\left(\int_{B(x_0,\varepsilon)}|\varphi_\varepsilon(x)|\;\dd|Du|(x) +\left(\int_{B(0,1)}|\nabla\varphi(y)|^d\;\dd y\right)^{1/d}\left(\int_{B(x_0,\varepsilon)}|u(x)|^{1^*}\;\dd x\right)^{1/1^*}\right)^{1^*}
\\
&\qquad=C\left(\int_{B(x_0,\varepsilon)}|\varphi_\varepsilon(x)|\;\dd|Du|(x) + \norm{\nabla\varphi}_{\Lp^d}\norm{u}_{\Lp^{1^*}(B(x_0,\varepsilon))}\right)^{1^*}.
\end{align*}
Letting $\varepsilon\downarrow 0$, we obtain
\[
 0<\gamma_0\leq \left(|Du|({x_0})\right)^{1^*},
\]
which is impossible for $d>1$, since the derivatives of $\BV(\Omega;\mbR^m)$ functions must vanish on singletons. Hence, $|u_j|$ must be $1^*$-uniformly integrable and so the result is proved.
\end{proof}

\begin{remark}
 The fact that $u_j\to u$ in $\Lp^{1^*}(\Omega;\mbR^m)$ whenever $u_j\to u$ area-strictly in $\BV(\Omega;\mbR^m)$ is a necessary consequence of Theorem~\ref{unbddcontinuitythm} in the case $p=1^*$. Letting $f(x,y,A)=|y|^{1^*}$, we see that Theorem~\ref{unbddcontinuitythm} implies $\norm{u_j}_{1^*}\to\norm{u}_{1^*}$ whenever $u_j\to u$ area-strictly. Since $\Lp^{1^*}$ is a uniformly convex space and $u_j\rightharpoonup u$ in $\Lp^{1^*}$ (a consequence of the fact that $(u_j)$ is bounded in $\Lp^{1^*}$ and that $u_j$ converges to $u$ in measure), we therefore have that $u_j\to u$ in $\Lp^{1^*}$ (see, for example, Proposition 3.32 of~\cite{BreHaiFSP}).
\end{remark}

\section{Perspective functions and area-strict convergence}\label{secperints}
The purpose of this section is to remove the $1$-homogeneity assumption which appears in Corollary~\ref{1homogenouscontinuity}. This is achieved by introducing a \emph{perspective function} $\tilde{f}$ for the integrand $f$ and exchanging strict convergence for area-strict convergence. We note here that a similar approach applying Reshetnyak's theorems combined with perspective functions to integral functionals on $\BV(\Omega;\mbR)$ can be found in~\cite{DMas79IRBV}. For a discussion of generalised perspective functions and their relevance to different notions of convexity, the reader is referred to~\cite{DacMar08PFIC}.

\begin{definition}[The perspective function]\label{defperfun}
Let $f\in\C(\Omega\times\mbR^m\times\mbR^{m\times d})$ be a map whose recession function $f^\infty$ exists. The \textbf{perspective function} $\tilde{f}\colon\Omega\times\mbR^{1+m}\times\mbR^{(1+m)\times d}\to\mbR$ of $f$ is defined by
\begin{equation}
 \tilde{f}\left(x,(r,y),(t,A)\right):=
\begin{cases}
|t|f\left(x,y,|t|^{-1}A\right) & \text{ if }|t|\neq 0\\
f^\infty\left(x,y,A\right) & \text{ if }|t|=0,
\end{cases}
\end{equation}
for $x\in\Omega$, $(r,y)\in\mbR\times\mbR^m\cong\mbR^{1+m}$ and $(t,A)\in\mbR^d\times\mbR^{m\times d}\cong\mbR^{(1+m)\times d}$.
\end{definition}
Strictly speaking, the perspective function of $f$ is only unique as an element of $\C(\Omega\times(\mbR\times\mbR^m)\times(\mbR^d\times\mbR^{m\times d}))$ where, for realisation as an element of $\C(\Omega\times\mbR^{1+m}\times\mbR^{(1+m)\times d})$ the canonical identifications $\mbR\times\mbR^m\cong\mbR^{1+m}$, $\mbR^d\times\mbR^{m\times d}\cong\mbR^{(1+m)\times d}$. We will tacitly assume that such a choice has been made and will speak simply of `the' perspective function.

It follows immediately from Definition \ref{defperfun} that $\tilde{f}$ is always positively $1$-homogeneous in the $(t,A)$ argument. The following lemma shows that $\tilde{f}$ inherits the continuity properties of $f$.
\begin{lemma}\label{lemlinperfuncts}
 Let $f\in \C(\Omega\times\mbR^m\times\mbR^{m\times d})$ be such that $f^\infty$ exists. Then $\tilde{f}\in \C(\Omega\times\mbR^{1+m}\times\mbR^{(1+m)\times d})$.
\end{lemma}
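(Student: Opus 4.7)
Since $\tilde f$ does not depend on the $r$-coordinate at all (both branches ignore it), continuity in $r$ is automatic, and it suffices to consider sequences $(x_n,y_n,t_n,A_n)\to(x_0,y_0,t_0,A_0)$ in $\Omega\times\mbR^m\times\mbR^d\times\mbR^{m\times d}$ and prove $\tilde f(x_n,\cdot,y_n,t_n,A_n)\to\tilde f(x_0,\cdot,y_0,t_0,A_0)$. The plan is to split into the regime $|t_0|\neq 0$ (away from the singularity of the definition) and $|t_0|=0$.

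For the first regime, eventually $|t_n|\neq 0$, and
\[
\tilde f(x_n,(r_n,y_n),(t_n,A_n)) \;=\; |t_n|\,f\bigl(x_n,y_n,|t_n|^{-1}A_n\bigr)
\]
is the composition of $f\in\C(\Omega\times\mbR^m\times\mbR^{m\times d})$ with continuous operations on its arguments, hence converges to $|t_0|f(x_0,y_0,|t_0|^{-1}A_0)=\tilde f(x_0,(r_0,y_0),(t_0,A_0))$.

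The interesting case is $|t_0|=0$; here I would pass to an arbitrary subsequence and separate it further into the subsequence $\mathcal{S}_1$ on which $|t_n|=0$ and the subsequence $\mathcal{S}_2$ on which $|t_n|\neq 0$, showing each converges to $f^\infty(x_0,y_0,A_0)$. Along $\mathcal{S}_1$, $\tilde f(x_n,(r_n,y_n),(0,A_n))=f^\infty(x_n,y_n,A_n)\to f^\infty(x_0,y_0,A_0)$ by the continuity of $f^\infty$, which is built into its definition (the paper notes this explicitly after the definition of the recession function). Along $\mathcal{S}_2$, setting $s_n:=|t_n|^{-1}\to\infty$, we rewrite
\[
\tilde f(x_n,(r_n,y_n),(t_n,A_n)) \;=\; |t_n|\,f\bigl(x_n,y_n,s_nA_n\bigr) \;=\; \frac{f(x_n,y_n,s_nA_n)}{s_n}.
\]
Now $(x_n,y_n,A_n,s_n)\to(x_0,y_0,A_0,\infty)$, and since by hypothesis $f^\infty$ exists in the strong sense that the limit
\[
\lim_{\substack{x'\to x_0,\,y'\to y_0\\A'\to A_0,\,t\to\infty}} \frac{f(x',y',tA')}{t}
\]
is independent of the approximating sequences, we obtain $f(x_n,y_n,s_nA_n)/s_n\to f^\infty(x_0,y_0,A_0)$, as required.

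The only subtle point—and what I would expect to flag as the main obstacle if the recession function were defined more loosely—is that one must use the \emph{joint} limit version of the definition of $f^\infty$, rather than merely the pointwise limit $\lim_{t\to\infty}f(x_0,y_0,tA_0)/t$. The paper's chosen definition builds in exactly this joint continuity, which is why the approach works uniformly in all the arguments varying simultaneously, and in particular handles the borderline case $A_0=0$ (where $f^\infty(x_0,y_0,0)=0$ by positive $1$-homogeneity) without extra work.
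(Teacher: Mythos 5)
Your argument is correct and follows the same route as the paper's (much terser) proof: continuity away from $\{|t|=0\}$ from the continuity of $f$, and continuity on $\{|t|=0\}$ directly from the joint-limit definition of $f^\infty$ (which also gives the continuity of $f^\infty$ itself that you use along the subsequence with $t_n=0$). Your closing remark correctly identifies why the paper's strengthened definition of the recession function is exactly what makes this work.
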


\begin{proof}
 That $\tilde{f}$ is continuous away from where $|t|=0$ is an immediate consequence of the continuity of $f$. Continuity of $\tilde{f}$ when $|t|=0$ follows directly from the definition of the recession function.
\end{proof}

The following construction, which essentially replaces $u(x)$ with its graph $(x,u(x))$, combined with Lemma~\ref{lemlinperfuncts}  allows us to remove the $1$-homogeneity assumption from Corollary~\ref{1homogenouscontinuity}:

For $u\in \BV(\Omega;\mbR^m)$, we define $U\in \BV(\Omega;\mbR^{1+m})$ by
\[
 U(x):=(|x|,u(x)).
\]
The sequence $(U_j)\subset \BV(\Omega;\mbR^{1+m})$ is defined analogously. From the derivative decomposition
\begin{align*}
DU&=\left(\frac{x}{|x|}\mathcal{L}^d\restrict\Omega,Du\right)=\left(\frac{x}{|x|},\nabla u\right)\mathcal{L}^d\restrict\Omega+(0,D^su),\\
|DU|&=\sqrt{\left|\frac{x}{|x|}\right|^2+|\nabla u|^2}\mathcal{L}^d\restrict\Omega +|D^su|=\sqrt{1+|\nabla u|^2}\mathcal{L}^d\restrict\Omega +|D^su|,
\end{align*}
it follows that
\begin{align*}
 u_j\wsc u \text{ in }\BV(\Omega;\mbR^m) & \iff U_j\wsc U\text{ in }\BV(\Omega;\mbR^{1+m}),\\
u_j\to u\text{ area-strictly in }\BV(\Omega;\mbR^m) & \iff U_j\to U\text{ strictly in }\BV(\Omega;\mbR^{1+m}).
\end{align*}
Upon computing, we find that
\begin{equation}
\begin{aligned} 
& \int_\Omega\tilde{f}\left(x,U(x),\frac{\dd DU}{\dd|DU|}(x)\right)\;\dd|DU|(x) \\&\qquad= \int_\Omega \tilde{f}\left(x,(|x|,u(x)),\frac{\big(\frac{x}{|x|},\nabla u(x)\big)}{\sqrt{1+|\nabla u(x)|^2}}\right)\sqrt{1+|\nabla u(x)|^2}\;\dd x \\ &\qquad\qquad\qquad\qquad+
\int_\Omega\int_0^1\tilde{f}\left(x,(|x|,u^\theta(x)),\left(0,\frac{\dd D^s u}{\dd|D^s u|}(x)\right)\right)\;\dd|D^s u|(x) \\ = &
\int_\Omega \tilde{f}\left(x,(|x|,u(x)),\left(\frac{x}{|x|},\nabla u(x)\right)\right)\;\dd x +\int_\Omega\int_0^1\tilde{f}\left(x,(|x|,u^\theta(x)),\left(0,\frac{\dd D^s u}{\dd|D^s u|}(x)\right)\right)\;\dd|D^s u|(x)\\ = &
\int_\Omega f\left(x,u(x),\nabla u(x)\right)\;\dd x +\int_\Omega\int_0^1 f^\infty\left(x,u^\theta (x),\frac{\dd D^s u}{\dd|D^s u|}(x)\right)\;\dd|D^s u|(x).
\end{aligned}
\end{equation}
This then immediately leaves us with a proof of Theorem~\ref{unbddcontinuitythm} in the case where $f(x,\frarg,y)$ is bounded:

\begin{lemma}\label{continuitythm}
 Let $f\in \C\left(\Omega\times\mbR^m\times\mbR^{m\times d}\right)$ satisfy $|f(x,y,A)|\leq C(1+|A|)$ and be such that $f^\infty$ exists. Then, the functional
\begin{equation*}
 \mathcal{F}[u]:=\int_\Omega f\left(x,u(x),\nabla u(x)\right)\;\dd x +\int_\Omega\int_0^1 f^\infty\left(x,u^\theta(x),\frac{\dd D^su}{\dd|D^su|}(x)\right)\;\dd\theta\;\dd|D^su|(x)
\end{equation*}
is $\langle\frarg\rangle$-strictly continuous on $\BV\left(\Omega;\mbR^m\right)$.
\end{lemma}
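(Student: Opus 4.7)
The plan is to exploit the perspective function machinery already set up to reduce the problem to the $1$-homogeneous case, which was resolved by Corollary~\ref{1homogenouscontinuity}. Essentially everything has been done by the time we reach the lemma; we just need to verify that $\tilde{f}$ satisfies the hypotheses of Corollary~\ref{1homogenouscontinuity} and then translate area-strict convergence of $u_j$ into strict convergence of $U_j$.

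First I would check the growth bound on $\tilde{f}$. From the definition, when $|t|\neq 0$ we have
\[
|\tilde{f}(x,(r,y),(t,A))|=|t|\,|f(x,y,|t|^{-1}A)|\leq|t|\,C(1+|t|^{-1}|A|)=C(|t|+|A|)\leq C\sqrt{2}\,|(t,A)|,
\]
and by continuity of $\tilde{f}$ (Lemma~\ref{lemlinperfuncts}), the same bound extends to $|t|=0$. Hence $\tilde{f}\in\C(\Omega\times\mbR^{1+m}\times\mbR^{(1+m)\times d})$ is positively $1$-homogeneous in the final variable and satisfies the linear growth bound $|\tilde{f}(x,Y,Z)|\leq C'|Z|$ required by Corollary~\ref{1homogenouscontinuity}.

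Next, given $u_j\to u$ area-strictly in $\BV(\Omega;\mbR^m)$, I would pass to $U_j(x):=(|x|,u_j(x))$ and $U(x):=(|x|,u(x))$, which are elements of $\BV(\Omega;\mbR^{1+m})$. The identity
\[
|DU_j|=\sqrt{1+|\nabla u_j|^2}\,\mathcal{L}^d\restrict\Omega+|D^s u_j|
\]
shows that area-strict convergence of $(u_j)$ is equivalent to strict convergence of $(U_j)$ in $\BV(\Omega;\mbR^{1+m})$. Applying Corollary~\ref{1homogenouscontinuity} to $\tilde{f}$ and the sequence $(U_j)$ then yields
\[
\int_\Omega \tilde{f}\!\left(x,U_j(x),\tfrac{\dd DU_j}{\dd|DU_j|}(x)\right)\dd|DU_j|(x)\longrightarrow \int_\Omega \tilde{f}\!\left(x,U(x),\tfrac{\dd DU}{\dd|DU|}(x)\right)\dd|DU|(x).
\]

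Finally, I would invoke the identity established in the displayed computation immediately preceding the lemma, namely
\[
\int_\Omega\tilde{f}\!\left(x,U(x),\tfrac{\dd DU}{\dd|DU|}(x)\right)\dd|DU|(x)=\mathcal{F}[u],
\]
and the same identity for each $U_j$, to conclude $\mathcal{F}[u_j]\to\mathcal{F}[u]$. There is no real obstacle here: the two key ingredients (the properties of $\tilde{f}$ and the compatibility between area-strict and strict convergence via the graph embedding $u\mapsto U$) have already been set up, and the hypothesis $|f(x,y,A)|\leq C(1+|A|)$ is used precisely to eliminate the $y$-dependence in the growth bound on $\tilde{f}$, allowing a direct application of Corollary~\ref{1homogenouscontinuity}. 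The only mild subtlety is verifying that the jump average $U^\theta$ coincides with $(|x|,u^\theta(x))$ (because the continuous component $|x|$ contributes nothing to the jump part), which is why the $u^\theta$ in $\mathcal{F}[u]$ matches what comes out of the $\tilde{f}$-functional applied to $U$.
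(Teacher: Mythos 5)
Your proposal is correct and follows exactly the route the paper takes: verify that the perspective function $\tilde{f}$ satisfies the hypotheses of Corollary~\ref{1homogenouscontinuity}, use the graph embedding $u\mapsto U=(|x|,u)$ to convert area-strict convergence of $(u_j)$ into strict convergence of $(U_j)$, and invoke the identity computed just before the lemma to identify the $\tilde{f}$-functional on $U$ with $\mathcal{F}[u]$. The paper's own proof is a one-line appeal to precisely these ingredients, and your filled-in details (the growth bound $|\tilde{f}(x,(r,y),(t,A))|\leq C(|t|+|A|)$ and the matching of jump averages) are the right ones.
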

\begin{proof}
 Simply use Lemma~\ref{lemlinperfuncts} to apply Corollary~\ref{1homogenouscontinuity} to the map
\[
U\mapsto \int_\Omega\tilde{f}\left(x,U(x),\frac{\dd DU}{\dd|DU|}(x)\right)\;\dd|DU|(x).
\]
\end{proof}

\section{Approximation arguments and unbounded integrands}\label{bigproof}
This section contains the final step in the proof of Theorem~\ref{unbddcontinuitythm} and also Theorem~\ref{notcts}, an extension of Theorem~\ref{unbddcontinuitythm} to Carath\'eodory integrands, as well as a counterexample to show that the hypotheses of Theorem~\ref{notcts} are optimal.

Lemma~\ref{continuitythm} has allowed us to show that $\mathcal{F}$ is area-strictly continuous whenever $f$ is continuous, satisfies $|f(x,y,A)|\leq C(1+|A|)$ and is such that $f^\infty$ exists. \color{black} To prove Theorem~\ref{unbddcontinuitythm} in the case where $f(x,\frarg,y)$ is unbounded,\color{black} we will approximate a general $f\in \C(\Omega\times\mbR^m\times\mbR^{m\times d})$ by a sequence of integrands $f_k\to f$ which satisfy the hypotheses of Lemma~\ref{continuitythm}. This approximation leaves the following remainder, the control of which is sufficient to prove the theorem:
\[
 \int_{\left\{|u_j(x)|\geq k\right\}}1+|u_j(x)|^p+|\nabla u_j(x)|\;\dd x,\qquad p\in[1,1^*]\text{ }([1,1^*)\text{ if }d=1).
\]
To control the first term in the integrand it suffices to use the fact that $u_j\to u$ in measure. To control the second, we will use the strong $\Lp^p$-convergence of $u_j$ to $u$: as a consequence of the Rellich-Kondrachov Theorem, we automatically have that $u_j$ converges to $u$ in $\Lp^p$ for $p\in[1,1^*)$, $\Lp^{1^*}$ convergence for the critical case is obtained via Proposition~\ref{ctsembedding}. We will use liftings in order to apply Reshetnyak's Lower Semicontinuity Theorem as a means of controlling the third term.

\begin{proof}[Proof of Theorem~\ref{unbddcontinuitythm}]
 Let $f$ satisfy the given hypotheses and let $u_j\to u$ area-strictly in $\BV(\Omega;\mbR^m)$. Let $(\varphi_n)_{n=1}^{\infty}$ be a smooth partition of unity of $\mbR^m$ such that each $\varphi_n$ has compact support with
\[
\begin{array}{ll}
\sum_{n=1}^k\varphi_n(y)=1 & \text{ whenever } |y|< k,\\
\sum_{n=1}^k\varphi_n(y)=0 & \text{ whenever } |y|\geq k+1.
\end{array}
\]
We define the sequence $(f_k)$ by 
\[
f_k(x,y,A)=\left(\sum_{n=1}^k\varphi_n(y)\right)f(x,y,A).
\]
By construction, $|f_k(x,y,A)|\leq C(1+k+|A|)$, and so each $f_k$ is in $\C(\overline{\Omega}\times\mbR^m\times\mbR^{m\times d})$ and satisfies the hypotheses of Lemma~\ref{continuitythm}. 

Define the functional $\mathcal{F}_k\colon \BV(\Omega;\mbR^m)\to\mbR$ by
\[
\mathcal{F}_k[u]:=\int_\Omega f_k\left(x,u(x),\nabla u(x)\right)\;\dd x +\int_\Omega\int_0^1 f_k^\infty\left(x,u^\theta(x),\frac{\dd D^su}{\dd|D^su|}(x)\right)\;\dd\theta\;\dd|D^su|(x),
\]
so that we can estimate

\begin{equation}
\begin{aligned}
 \lim_{j\to\infty}|\mathcal{F}[u]-\mathcal{F}[u_j]|&\leq \liminf_{k\to\infty}\lim_{j\to\infty}|\mathcal{F}[u_j]-\mathcal{F}_k[u_j]| \\ &
\qquad+\lim_{k\to\infty}\lim_{j\to\infty}|\mathcal{F}_k[u_j]-\mathcal{F}_k[u]|+\lim_{k\to\infty}|\mathcal{F}_k[u]-\mathcal{F}[u]|.
\end{aligned}
\end{equation}
By the Dominated Convergence Theorem, the final term tends to $0$ as $k\to\infty$ and, by Lemma~\ref{continuitythm}, $\lim_{j\to\infty}\mathcal{F}_k[u_j]=\mathcal{F}_k[u]$ for every $k\in\mathbb{N}$ and so the second term is equal to $0$. Hence, in order to prove area-strict continuity of $\mathcal{F}$, we need only control the first term. 

Assume for simplicity that $(u_j)\subset \C^\infty(\Omega;\mbR^m)$. Since $\C^\infty(\Omega;\mbR^m)$ is dense in $\BV(\Omega;\mbR^m)$ with respect to the area-strict topology, this is not a restrictive assumption: once the result is proved for convergent sequences of smooth functions, we can use a diagonal argument to show that our argument holds for any area-strictly convergent sequence $(u_j)$. Now consider
\begin{align}\label{importantbound}
 |\mathcal{F}[u_j]-\mathcal{F}_k[u_j]| & \leq\int_\Omega|f\left(x,u_j(x),\nabla u_j(x)\right)-f_k\left(x,u_j(x),\nabla u_j(x)\right)|\;\dd x \nonumber\\ & =
\int_\Omega \left(\sum_{n=k+1}^\infty\varphi_n(u_j(x))\right)|f\left(x,u_j(x),\nabla u_j(x)\right)|\;\dd x  \nonumber\\ & \leq
\int_{\left\{|u_j(x)|\geq k\right\}}|f\left(x,u_j(x),\nabla u_j(x)\right)|\;\dd x \nonumber \\ & \leq
C\int_{\left\{|u_j(x)|\geq k\right\}}\left(1+|u_j(x)|^p+|\nabla u_j(x)|\right)\;\dd x.
\end{align}
Since $u_j\to u$ strongly in $\Lp^p$ (because of Rellich Kondrachov if $p<1^*$ and Proposition~\ref{ctsembedding} if $p=1^*$), the sequence $(u_j)$ is $p$-uniformly integrable. We also have that $u_j\to u$ in measure (as a consequence of just $\Lp^1$ convergence), and so the first two terms in the final integrand vanish uniformly in $j$ as $k\to\infty$. We are left, then, with the task of controlling the term
\[
\lim_{j\to\infty}\int_{\left\{|u_j(x)|\geq k\right\}}|\nabla u_j(x)|\;\dd x.
\]
We can rewrite this integral in terms of the lifting $\mu[u_j]$,
\[
\int_{\left\{|u_j(x)|\geq k\right\}}|\nabla u_j(x)|\;\dd x= \int_{\Omega\times\mbR^m}\mathbbm{1}_{\mbR^m\setminus{B_k(0)}}(y)\;\dd|\mu[u_j]|(x,y),
\]
where $B_k(0)$ is the open ball in $\mbR^m$ of radius $k$ centered at $0$. 
Let $\mu[u_j]=\mu_j,\mu[u]=\mu$ and note that these satisfy the hypotheses of Reshetnyak's Lower Semicontinuity Theorem. Since $\mbR^m\setminus{B_k(0)}$ is closed, the function $(y,A)\mapsto(1-\mathbbm{1}_{\mbR^m\setminus{B_k(0)}}(y))|A|$ is lower semicontinuous and positively 1-homogeneous and convex in the second variable. Applying Reshetnyak's Lower Semicontinuity Theorem, then, we see that
\begin{equation}\label{lowersemicontinuitywithmu}
 \int_{\Omega\times\mbR^m}(1-\mathbbm{1}_{\mbR^m\setminus B_k(0)}(y))\;\dd|\mu[u]|(x,y)\leq\liminf_{j\to\infty}\int_{\Omega\times\mbR^m}(1-\mathbbm{1}_{\mbR^m\setminus B_k(0)}(y))\;\dd|\mu[u_j]|(x,y).
\end{equation}
Since $\mu[u_j]\to\mu[u]$ strictly, we have that
\[
 \int_{\Omega\times\mbR^m}1\;\dd|\mu[u]|(x,y)=\lim_{j\to\infty}\int_{\Omega\times\mbR^m}1\;\dd|\mu[u_j]|(x,y).
\]
Hence, \eqref{lowersemicontinuitywithmu} becomes
\[
 \int_{\Omega\times\mbR^m}\mathbbm{1}_{\mbR^m\setminus B_k(0)}(y)\;\dd|\mu[u]|(x,y)\geq\limsup_{j\to\infty}\int_{\Omega\times\mbR^m}\mathbbm{1}_{\mbR^m\setminus B_k(0)}(y)\;\dd|\mu[u_j]|(x,y).
\]
Since $\mathbbm{1}_{\mbR^m\setminus B_k(0)}(y)\to 0$ pointwise, the Dominated Convergence Theorem implies that the left hand side of this final inequality must tend to $0$ as $k\to\infty$. The result is hence proved.
\end{proof}

The condition \eqref{assumption1} does not make sense when $d=1$, whereby $1^*=\infty$, and the ``natural'' requirement in this case is the nonlocal condition
\[
 \mathcal{F}[u]\leq C(1+\norm{u}_\infty+|Du|(\Omega)).
\]
However, the following example demonstrates that Proposition~\ref{ctsembedding} does not hold in this case and hence that allowing $p$-growth for $p<\infty$ only is optimal for $d=1$.
\begin{example}\label{excriticalcasefails}
 Let $\Omega=(-1,1)$ and define $(u_j)\in \BV(\Omega;\mbR)$ by
\begin{equation*}
 u_j(x):=\begin{cases}
          -1 & \text{ if }x\in(-1,-1/j),\\
	  jx & \text{ if }x\in [-1/j,1/j],\\
	  1 & \text{ if }x\in(1/j, 1).
         \end{cases}
\end{equation*}
We clearly have that $u_j$ converges area-strictly to the function $u:=-\mathbbm{1}_{(-1,0]}+\mathbbm{1}_{(0,1)}$, but not uniformly since (however $u$ is defined at $x=0$) $\sup_{x\in[-1/j,1/j]}|u_j(x)-u(x)|=1$. Hence, Theorem~\ref{unbddcontinuitythm} does not hold for the functional $\mathcal{F}[u_j]=\norm{u_j-u}_\infty$.
\end{example}

Finally, we will weaken our assumptions on the regularity of $f$ and $f^\infty$. The proof of Theorem~\ref{notcts} proceeds by using the Scorza Dragoni theorem to determine the result when $f$ is bounded. An approximation argument is then used to extend this result to the case where $f^\infty\equiv 0$ (ie, when $f$ has `negligible growth at $\infty$'). Applying this result to $f-f^\infty$ lets us deduce the general result.

\begin{theorem}\label{notcts}
Let $u_j\to u$ area-strictly in $\BV(\Omega;\mbR^m)$ and let $f\colon\Omega\times\mbR^m\times\mbR^{m\times d}\to\mbR$ be a Carath\'eodory integrand satisfying \eqref{assumption1} whose recession function $f^\infty$ exists on the set $(\Omega\setminus N)\times\mbR^m\times\mbR^{m\times d}$, where $N$ is some Borel set satisfying $(\mathcal{L}^d+|Du|)(N)=0$. Then it holds that $\mathcal{F}[u_j]\to\mathcal{F}[u]$.
\end{theorem}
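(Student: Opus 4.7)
My plan follows the three-step reduction sketched immediately before the statement: first establish the result for bounded Carathéodory $f$ via Scorza--Dragoni, then extend to the case $f^\infty\equiv 0$ by truncation, and finally split a general $f$ as $(f-f^\infty)+f^\infty$. Throughout, a diagonal argument using area-strict density of $\C^\infty(\Omega;\mbR^m)$ in $\BV(\Omega;\mbR^m)$ lets me assume $(u_j)\subset\C^\infty(\Omega;\mbR^m)$, so that $D^su_j=0$ and only the Lebesgue part of $\mathcal{F}[u_j]$ is active; the singular part of $\mathcal{F}[u]$ makes sense because $|Du|(N)=0$ by hypothesis.

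Step 1 (bounded $f$): if $\|f\|_\infty\leq M$, then $f^\infty\equiv 0$ wherever it exists, so the singular term in $\mathcal{F}[u]$ vanishes. Scorza--Dragoni combined with inner regularity of $|Du|$ produces a compact $K_\varepsilon\subset\Omega\setminus N$ with $(\mathcal{L}^d+|Du|)(\Omega\setminus K_\varepsilon)<\varepsilon$ and $|Du|(\pd K_\varepsilon)=0$, on which $f$ is continuous. Tietze plus truncation extends $f|_{K_\varepsilon\times\mbR^m\times\mbR^{m\times d}}$ to a continuous bounded $\tilde f_\varepsilon\in\C(\overline\Omega\times\mbR^m\times\mbR^{m\times d})$ with $\tilde f_\varepsilon^\infty=0$. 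Theorem~\ref{unbddcontinuitythm} applies to $\tilde f_\varepsilon$, and the discrepancies $|\mathcal{F}[v]-\tilde{\mathcal{F}}_\varepsilon[v]|$ for $v\in\{u_j,u\}$ are each bounded by $2M\mathcal{L}^d(\Omega\setminus K_\varepsilon)\leq 2M\varepsilon$; passing $\varepsilon\to 0$ finishes this step.

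Step 2 ($f^\infty\equiv 0$): truncate by $f_k:=T_k\circ f$ with $T_k$ the projection onto $[-k,k]$, so that $f_k$ is bounded Carathéodory with $f_k^\infty=0$ and Step 1 gives $\mathcal{F}_k[u_j]\to\mathcal{F}_k[u]$. The remainder is dominated by $C\int_{\{|u_j|\geq k\}\cup\{|\nabla u_j|\geq k\}}(1+|u_j|^p+|\nabla u_j|)\,\dd x$, which I split according to the regime $\{|u_j|\leq M\}$ versus its complement. On $\{|u_j|\leq M\}$, uniform convergence of $f(x,y,A)/|A|$ to $0$ on compact subsets of $\Omega\times\mbR^m$ (a consequence of the joint-limit definition of $f^\infty$ and an additional Scorza--Dragoni reduction to make the convergence uniform) yields $|f|\leq\varepsilon|\nabla u_j|$ for $|\nabla u_j|$ large, producing a contribution of at most $\varepsilon\sup_j|Du_j|(\Omega)\leq C\varepsilon$. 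On $\{|u_j|>M\}$, the $1$- and $|u_j|^p$-contributions vanish uniformly in $j$ by $p$-uniform integrability (Rellich--Kondrachov for $p<1^*$, Proposition~\ref{ctsembedding} for $p=1^*$), while the problematic $\int_{\{|u_j|>M\}}|\nabla u_j|\,\dd x$ is recast via the lifting $\mu[u_j]$ as $\int\mathbbm{1}_{\mbR^m\setminus B_M(0)}(y)\,\dd|\mu[u_j]|(x,y)$ and bounded using Reshetnyak's Lower Semicontinuity Theorem together with strict convergence $\mu[u_j]\to\mu[u]$ from Lemma~\ref{mugoodies}, exactly as in the proof of Theorem~\ref{unbddcontinuitythm}. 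Sending $j,k,M\to\infty$ in a coordinated way and then $\varepsilon\to 0$ closes the step.

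Step 3 (general case) and main obstacle: write $f=(f-f^\infty)+f^\infty$. Positive $1$-homogeneity of $f^\infty$ gives $(f-f^\infty)^\infty=0$ while preserving the linear growth bound, so Step 2 applies to $f-f^\infty$. For the residual functional $\mathcal{H}$ associated with $f^\infty$, I again invoke Scorza--Dragoni plus $|Du|$-inner regularity to find a compact $K_\varepsilon\subset\Omega\setminus N$ on which $f^\infty$ is continuous, then extend $f^\infty|_{K_\varepsilon\times\mbR^m\times\pd\mathbb{B}^{m\times d}}$ by Tietze and propagate $1$-homogeneously in the final variable to obtain a continuous $\tilde f^\infty_\varepsilon\in\C(\overline\Omega\times\mbR^m\times\mbR^{m\times d})$ with $|\tilde f^\infty_\varepsilon|\leq C|A|$; applying Theorem~\ref{unbddcontinuitythm} yields $\tilde{\mathcal{H}}_\varepsilon[u_j]\to\tilde{\mathcal{H}}_\varepsilon[u]$, and the error $|\mathcal{H}[v]-\tilde{\mathcal{H}}_\varepsilon[v]|\leq C|Dv|(\Omega\setminus K_\varepsilon)$ is $O(\varepsilon)$ because strict convergence of $|Du_j|$ to $|Du|$ and $|Du|(\pd K_\varepsilon)=0$ force $|Du_j|(\Omega\setminus K_\varepsilon)\to|Du|(\Omega\setminus K_\varepsilon)<\varepsilon$. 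I expect the main obstacle to be Step 2: the $|\nabla u_j|$-tail must be controlled simultaneously on $\{|u_j|\leq M\}$ (where one exploits $f^\infty=0$) and on $\{|u_j|>M\}$ (where one needs the lifting--Reshetnyak mechanism from Lemma~\ref{mugoodies}), and the interleaved limits in $k,M,j,\varepsilon$ must be orchestrated so that each regime is tamed by the appropriate tool without clashing with the others.
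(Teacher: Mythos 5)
Your three-step skeleton (bounded $f$ via Scorza--Dragoni and Tietze; then $f^\infty\equiv 0$ by truncation; then $f=(f-f^\infty)+f^\infty$) is exactly the paper's, and your Steps 1--2 are essentially the paper's argument, modulo a reshuffling: the paper first removes the $|y|^p$-growth once and for all (partition of unity in $y$, estimate \eqref{importantbound}, $p$-uniform integrability, and the lifting/Reshetnyak tail bound), and only then runs the bounded/$f^\infty\equiv 0$/general trichotomy for integrands with $|f|\leq C(1+|A|)$, whereas you interleave the $y$- and $A$-truncations inside Step 2. That reordering is workable, though your appeal to ``uniform convergence of $f(x,y,A)/|A|$ on compacts after a further Scorza--Dragoni reduction'' reintroduces an exceptional set $E\subset\Omega$ on which you would again have to control $\int_E|\nabla u_j|\,\dd x$ by something other than $\mathcal{L}^d(E)$ — the same concentration issue you are trying to avoid — so the paper's order of reductions is cleaner.

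The genuine gap is in Step 3, in the treatment of the residual functional $\mathcal{H}$ built from $f^\infty$. You need a compact $K_\varepsilon\subset\Omega\setminus N$ with $(\mathcal{L}^d+|Du|)(\Omega\setminus K_\varepsilon)<\varepsilon$ \emph{and} $|Du|(\partial K_\varepsilon)=0$; the last condition is essential to your argument, because for the open set $\Omega\setminus K_\varepsilon$ weak* convergence $|Du_j|\wsc|Du|$ only gives $\liminf_j|Du_j|(\Omega\setminus K_\varepsilon)\geq|Du|(\Omega\setminus K_\varepsilon)$ — the wrong direction — and in general $\limsup_j|Du_j|(\Omega\setminus K_\varepsilon)\leq|Du|(\Omega\setminus K_\varepsilon)+|Du|(\partial K_\varepsilon)$. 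Such a $K_\varepsilon$ need not exist: take $d=1$, $N=\mathbb{Q}\cap\Omega$ and $|Du|$ a singular measure concentrated on a Lebesgue-null Cantor set. Any compact $K\subset\Omega\setminus N$ contains no interval, hence $K=\partial K$ and $|Du|(\partial K_\varepsilon)=|Du|(K_\varepsilon)\approx|Du|(\Omega)$; moreover, for smooth $u_j$ one has $|Du_j|(K_\varepsilon)=\int_{K_\varepsilon}|\nabla u_j|\,\dd x=0$, so $|Du_j|(\Omega\setminus K_\varepsilon)\to|Du|(\Omega)$ and your error term $C|Du_j|(\Omega\setminus K_\varepsilon)$ does not become small. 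The paper avoids this entirely by not excising $N$ at all: it observes that $N$ is negligible for the limit measure $|\mu[u]|$ and invokes the version of Reshetnyak's Continuity Theorem valid for integrands continuous only off a set that the limit measure does not charge (Proposition~1.62 and the proof of Theorem~2.39 in~\cite{AmFuPa00FBVF}). To repair your Step 3 you should replace the Scorza--Dragoni/Tietze excision by this measure-theoretic form of Reshetnyak's theorem applied to $\mu[u_j]\to\mu[u]$ strictly.
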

In particular, this theorem implies that $\mathcal{F}$ is area-strictly continuous for any Carath\'eodory $f$ where $f^\infty$ exists on $(\Omega\setminus N)\times\mbR^m\times\mbR^{m\times d}$ for some Borel set $N$ with $\mathcal{H}^{d-1}(N)=0$.
\begin{proof}
As in the proof of Theorem~\ref{continuitythm}, we start with the estimate
\begin{equation*}
\begin{aligned}
 \lim_{j\to\infty}|\mathcal{F}[u]-\mathcal{F}[u_j]| & \leq \liminf_{k\to\infty}\lim_{j\to\infty}|\mathcal{F}[u_j]-\mathcal{F}_k[u_j]|\\ &\qquad+\lim_{k\to\infty}\lim_{j\to\infty}|\mathcal{F}_k[u_j]-\mathcal{F}_k[u]|+\lim_{k\to\infty}|\mathcal{F}_k[u]-\mathcal{F}[u]|,
\end{aligned}
\end{equation*}
where $\mathcal{F}_k$ is defined as before. To control the first term, we can repeat the approximation argument used in the proof of Theorem~\ref{continuitythm} exactly, since the estimate \eqref{importantbound} remains valid under our new hypotheses on $f$.

As a consequence of our assumptions on $f$ and $f^\infty$, the functions $x\mapsto f_k(x,u(x),\nabla u(x))$ and $x\mapsto\int_0^1 f^\infty_k(x,u^\theta(x),\frac{\dd Du}{\dd|Du|}(x))\,\dd\theta$ are defined $\mathcal{L}^d$ and $|D^su|$-almost everywhere respectively and converge pointwise $\mathcal{L}^d$ (or $|D^s u|$)-almost everywhere to $x\mapsto f(x,u(x),\nabla u(x))$ and $x\mapsto\int_0^1 f^\infty(x,u^\theta(x),\frac{\dd Du}{\dd|Du|}(x))\,\dd\theta$ as $k\to\infty$. Hence, as before, we can deduce as from the Dominated Convergence Theorem that
\[
 \lim_{k\to\infty}|\mathcal{F}_k[u]-\mathcal{F}[u]|=0.
\]
Consequently, we need only control the second term in the estimate above. It suffices, therefore, to consider the case where $|f(x,y,A)|\leq C(1+|A|)$. We will complete the proof for this case in three steps:

First, assume that $f$ is bounded, so that $|f(x,y,A)|\leq C$ for all $(x,y,A)\in\Omega\times\mbR^m\times\mbR^{m\times d}$. By the Scorza--Dragoni Theorem, there exists a compact set $K_\varepsilon\subset\Omega$ such that $\mathcal{L}^d(\Omega\setminus K_\varepsilon)\leq \varepsilon$ and that $f\restrict(K_\varepsilon\times\mbR^m\times\mbR^{m\times d}$) is continuous. By the Tietze Extension Theorem, we can find a continuous function $g\in \C(\Omega\times\mbR^m\times\mbR^{m\times d})$ which restricts to $f$ on $K_\varepsilon\times\mbR^m\times\mbR^{m\times d}$. Moreover, by truncating $g$ outside of $K_\varepsilon\times\mbR^m\times\mbR^{m\times d}$ if needs be, we can assume that $g$ is bounded and that $\norm{g}_\infty=\norm{f}_\infty$. We note that Theorem~\ref{unbddcontinuitythm} applies to $g$, so that
\[
 \int_\Omega g(x,u_j(x),\nabla u_j(x))\;\dd x\to\int_\Omega g(x,u(x),\nabla u(x))\;\dd x
\]
for any sequence $(u_j)$ converging area-strictly to $u$ in $\BV(\Omega;\mbR^m)$. However, we also have that
\begin{align*}
 &\bigg|\int_\Omega f(x,u_j(x),\nabla u_j(x))-g(x,u_j(x),\nabla  u_j(x))\; \dd x\bigg| \\ &\qquad\leq\int_{\Omega\setminus K_\varepsilon}|f(x,u_j(x),\nabla u_j(x))|+|g(x,u_j(x),\nabla u_j(x))|\;\dd x \leq 2\varepsilon\norm{f}_\infty,
\end{align*}
and so
\[
 \int_\Omega f(x,u_j(x),\nabla u_j(x))\;\dd x\to \int_\Omega f(x,u(x),\nabla u(x))\;\dd x
\]
 as well.

Next, assume that $f$ satisfies $|f(x,y,A)|\leq C(1+|A|)$, but that $f^\infty\equiv 0$. This implies that, for every $\varepsilon>0$, there exists $R>0$ such that $|f(x,y,A)|\leq\varepsilon(1+|A|)$ whenever $|A|\geq R$. Otherwise, we would have a sequence of points $A_k$ such that $|A_k|\to\infty$ where $|f(x,y,A_k)|\geq\varepsilon(1+|A_k|)$. The sequence $(A_k/(1+|A_k|))$ must have a convergent subsequence, converging to some limit $\tilde{A}\in\mathbb{B}^{m\times d}$. Taking the limit in $f(x,y,A_k)/(1+|A_k|)$ we would then have that $f^\infty(x,y,\tilde{A})\geq\varepsilon$, a contradiction. Now take $\varphi_\varepsilon\in \C_0^\infty (\mbR^{m\times d})$ satisfying $0\leq\varphi_\varepsilon\leq 1$, $\varphi_\varepsilon\equiv 1$ on $B(0,R)$, $\varphi_\varepsilon\equiv 0$ on $\mbR^{m\times d}\setminus B(0,R+1)$ in order to define $f_\varepsilon=\varphi_\varepsilon f$. By construction, $f_\varepsilon$ is bounded on $\Omega\times\mbR^m\times\mbR^{m\times d}$ and so, by the previous step, the associated functional $\mathcal{F}_\varepsilon$ is continuous. We also see, however, that
\begin{align*}
 \left|\int_\Omega f_\varepsilon(x,u_j(x),\nabla u_j(x))-f(x,u_j(x),\nabla u_j(x))\;\dd x\right| & \leq\int_{\{|\nabla u(x)|\geq R\}}|f(x,u_j(x),\nabla u_j(x))|\;\dd x \\ & \leq \int_{\{|\nabla u_j(x)|\geq R\}}\varepsilon(1+|\nabla u_j(x)|)\;\dd x \\ & \leq \varepsilon(|\Omega|+\norm{\nabla u_j}_1)
\end{align*}
and so the functional associated to $f$ is area-strictly convergent as well.

Now, assume that $f$ satisfies $f(x,y,A)|\leq C(1+|A|)$ and that $f^\infty$ is defined and continuous on $(\Omega\setminus N)\times\mbR^m\times\mbR^{m\times d}$ for some Borel set $N$ satisfying $(\mathcal{L}^d+|Du|)(N)=0$. Setting $h=f-f^\infty$ and applying our work from the previous step, we see that it is sufficient to show that
\begin{align*}
 &\int_{(\Omega\setminus N)\times\mbR^m}f^\infty\left(x,y,\frac{\dd\mu[u_j]}{\dd|\mu[u_j]|}(x,y)\right)\;\dd|\mu[u_j]|(x,y)\\ &\qquad\to \int_{(\Omega\setminus N)\times\mbR^m}f^\infty\left(x,y,\frac{\dd\mu[u]}{\dd|\mu[u]|}(x,y)\right)\;\dd|\mu[u]|(x,y).
\end{align*}
This is trivial, however, since the discontinuity set, $N$, of $f^\infty$ is negligible with respect to the limit measure $|\mu[u]|$ which implies that Reshetnyak's Continuity Theorem still holds in this case (see, for example, Proposition 1.62 and the proof of Theorem 2.39 in~\cite{AmFuPa00FBVF}), and so the theorem is proved.
\end{proof}
Finally, we finish with an example which shows that Theorem~\ref{notcts} is optimal, in the sense that $f^\infty$ cannot be discontinuous on a set that is charged by $|Du|$ if we are to expect area-strict continuity from $\mathcal{F}$.
\begin{example}
 Let $\Omega=(-1,1)$ and define the sequence $(u_j)\in \BV(\Omega;\mbR)$ by $u_j:=\mathbbm{1}_{[-1/j,1)}$, so that $(u_j)$ converges area-strictly to $u=\mathbbm{1}_{[0,1)}$. Define the function $g\colon\Omega\times\mbR\to\mbR$ by
\[
 g(x,A):=|A|\begin{cases}
          0 &\text{if }\quad x\leq-1/|A|,\\
	  |A|(x+1/|A|)&\text{if }\quad-1/|A|<x<0,\\
	  1 &\text{if }\quad x\geq 0.
         \end{cases}
\]
Now let $h\in \C_c^\infty(\mbR)$ be such that $h\equiv 1$ on $[-1,1]$ and define
\[
 f(x,A):=(1-h(A))g(x,A).
\]
We have that $f$ is a continuous function and that $f^\infty(x,A)=\mathbbm{1}_{[0,1)}(x)|A|$. We can therefore compute
\[
 \mathcal{F}[u_j]=0+\int_{-1}^{1}\mathbbm{1}_{[0,1)}\;\dd\delta_{-1/j}=0,
\]
whereas
\[
 \mathcal{F}[u]=\mathbbm{1}_{[0,1)}\;\dd\delta_{0}=1.
\]
\end{example}

%\bibliography{math_bib}
%\bibliographystyle{plain}

\end{document}